\numberwithin{equation}{section}
\newtheorem{theorem}{Theorem}[section]
\newtheorem{lemma}[theorem]{Lemma}
\newtheorem{remark}[theorem]{Remark}
\newcommand{\R}{\mathbb{R}}
\begin{document}

\title[Free boundary minimal hypersurfaces in the Schwarzschild space]{Proper free-boundary minimal hypersurfaces with a rotational symmetry in the Schwarzschild space}

\author[Barbosa and Moya]{Ezequiel Barbosa and David Moya*}\thanks{*Research partially supported by a MINECO/FEDER grant no. MTM2017-89677-P, MICINN grant PID2020-117868GB-I00,  AEI/10.13039/501100011033/, IMAG–Maria de Maeztu grant CEX2020-001105-M and A-FQM-139-UGR18}

\address{Departamento de Geometría y Topología, Facultad de Ciencias. Universidad de Granada, 18071-Granada-Spain}

\email{ezequiel@mat.ufmg.br}
\email{dmoya@ugr.es}

\begin{abstract}
In this work we present a new family of properly embedded free boundary minimal hypersurfaces of revolution with circular boundaries in the horizon of the $n$-dimensional Schwarzschild space, $n\geq3$. In particular, we answer a question proposed by O. Chodosh and D. Ketover \cite{CK} on the existence of non-totally geodesic minimal surfaces in the 3-dimensional Schwarzschild space. 

\end{abstract}

\maketitle

\section{Introduction}

Let $n$ be an integer, $n\geq 3$, and $k\in \mathbb{R}$, $k\geq1$, $n\neq 2k$. For each real number $m>0$, we consider the $n$-dimensional domain
\[
M^{n}:=\left\{\vec{x}\in\mathbb{R}^{n}\,;\,\, |\vec{x}|\geq R_{0} \right\}\,,
\]
$R_{0} := \left(\frac{m}{2}\right)^{\frac{k}{n-2k}}$, endowed with the Riemannian metric
\[
g_{Sch(k)}=\left( 1+\frac{m}{2|\vec{x}|^{\frac{n}{k}-2}} \right)^{\frac{4k}{n-2k}}\,\langle,\rangle\, ,
\]
where $|\vec{x}|^2=|(x_1,\dots,x_n)|^2=\sum\limits_{i=1}^{n}x_i^2$ and $\langle,\rangle$ denotes the Euclidean metric. Those metrics are the general Schwarzschild metric. When $k=1$ we recover the very well known Schwarzschild space. We write $M^n_k$ to denote the domain $M^n$ endowed with the Riemannian metric $g_{Sch(k)}$. These general Schwarzschild spaces are very important examples of asymptotically flat manifolds. In fact, the simplest solution to the Einstein equations that contains an asymptotically flat manifold as an initial data set is the Lorentz-Minkowski space where the Euclidean space is the initial data set. The second one, found in 1916 by K. Schwarzschild \cite{Sch16}, is the Schwarzschild vacuum which is a solution to the Einstein field equations that describes the gravitational field outside a spherical mass; which initial data set is known as the Riemannian Schwarzschild manifold. The significant difference of this particular solution is the existence of a "singularity" and it is surrounding a spherical boundary, called the event horizon.The Riemannian Schwarzschild manifold is a fundamental example in the study of static black holes. Recently, other interesting asymptotically flat manifolds with event horizons (more general Schwarzschild type metrics), which play an important role as the Schwarzschild space in Einstein gravity, have been studied in the Einstein-Gauss-Bonnet theory and the pure Lovelock theory (see, for instance, \cite{9}, \cite{15}, \cite{20A}, \cite{33}, \cite{38}).

 A key ingredient to study physical properties of these manifolds are the minimal hypersurfaces contained in them; as the proof of the Positive Mass Theorem, given by R. Schoen and Yau \cite{RSchSYau81,RSchSYau17}, has shown.  The aim of this work is to study the existence of certain type of minimal hypersurfaces with boundary meeting orthogonally the horizon of a Riemannian general Schwarzschild space.

We can check that the boundary of $M_k^n$, the horizon $S_{0}=\{\vec{x}\in\mathbb{R}^{n}\,;\,\, |\vec{x}|=R_{0} \}$, is a closed totally geodesic hypersurface in $M^n_k$. Also, we consider the totally geodesic Euclidean hypersurfaces which are the rotations of the coordinate hyperplane $\Sigma _0$ through the origin minus the open ball $B_{0}:=B( R_{0})$ of radius $R_0$ centered at the origin: 
\begin{equation}\label{Sigma}
\Sigma_0=\{\vec{x}\in\mathbb{R}^{n}\,;\,\, x_{n}= 0 \,,\,\,|\vec{x}|\geq R_{0} \}.
\end{equation}

Let $u:\R \rightarrow \R$ be a differentiable function and $h:\R^n\rightarrow \R$ be given by $h(\vec{x})=u(|\vec{x}|^2)$ and consider the conformal change $\overline{g}=e^{2h}\langle,\rangle$. Denote by $k_i$ and $\overline{k}_i$ the principal curvatures of a hypersurface $\Sigma$ with respect to the Euclidean metric $\langle,\rangle$ and the metric $\overline{g}$, respectively. We have that $k_i$ and $\overline{k}_i$ are related as
\[
\overline{k}_i=e^{-h}\left(k_i-2u'(|\vec{x}|^2)\langle \vec{x},N\rangle\right)\qquad i=1,\dots,n-1,
\]
where $N$ stands for the normal vector at the point $\vec{x}\in \Sigma$. As the function $\langle \vec{x},N\rangle$ vanishes on $\Sigma_0$, we have that hypersurface $\Sigma_0$ is a properly embedded free boundary totally geodesic, in particular minimal, hypersurface in $M^n_k$, i.e., the boundary $\partial \Sigma_0$ coincides with $\Sigma_0 \cap \partial M$, its mean-curvature with respect to $g_{Sch}$ vanishes and the boundary $\partial \Sigma_0$ meets $\partial M$ orthogonally.
 
For the 3-dimensional Riemannnian Schwarzschild space $M^3_1$, O. Chodosh and D. Ketover \cite{CK} proposed to study the Morse index of the annuli surface $\Sigma_0$, i.e., the maximum number of directions, tangential along $\partial M$, in whose the surface can be deformed in such a way that its area decreases, and also to investigate the existence of unbounded non-totally geodesic free boundary minimal surfaces (see page 5 in \cite{CK}): "are these annuli and the horizon the only embedded
minimal surfaces in Schwarzschild?" It follows from the work of A. Carlotto \cite{C16} that the Morse index of $\Sigma_0$ can not be zero. Very recently, R. Montezuma \cite{RaMo} computed the Morse index of $\Sigma_0$: it was proved that the Morse index of $\Sigma_0$ is one. In this work we consider the existence question and we prove that the annuli $\Sigma_0$ is not the only properly embedded free boundary minimal surface in the 3-dimensional general Schwarzschild space.

\begin{theorem}\label{MAIN}
For each $t_0\in (0,R_0)$ there exists a proper non totally geodesic rotationally symmetric free boundary minimal surface $\Sigma_{t_0}$ in the Schwarszchild space $M^3_k$, that intersects $\partial M^3_k$ at height $x_3=t_0$. Moreover, the height of $\Sigma_{t_0}$ with respect to the $x_3=0$ plane is unbounded.  
\end{theorem}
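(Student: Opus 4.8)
The plan is to reduce minimality to an ODE for the generating curve and then analyze that ODE qualitatively. Since $g_{Sch(k)}=e^{2h}\langle,\rangle$ is conformal to the Euclidean metric and conformal changes preserve angles, the free-boundary condition at the horizon $S_0$ is equivalent to Euclidean orthogonality to the sphere $\{|\vec x|=R_0\}$. I write a surface of revolution about the $x_3$-axis as $X(s,\theta)=(r(s)\cos\theta,\,r(s)\sin\theta,\,z(s))$, with generating curve $\gamma(s)=(r(s),z(s))$ parametrized by Euclidean arclength and tangent angle $\beta(s)$, so that $r'=\cos\beta$ and $z'=\sin\beta$. A direct computation gives the Euclidean principal curvatures $k_1=\beta'$ and $k_2=\sin\beta/r$, the unit normal $N=(-z'\cos\theta,-z'\sin\theta,r')$, and $\langle\vec x,N\rangle=z\cos\beta-r\sin\beta$. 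Substituting into the relation $\overline k_i=e^{-h}\big(k_i-2u'(|\vec x|^2)\langle\vec x,N\rangle\big)$ and imposing $\overline H=\overline k_1+\overline k_2=0$ (recall $n=3$) turns minimality into the first-order system
\begin{equation}\label{sys}
r'=\cos\beta,\qquad z'=\sin\beta,\qquad \beta'=-\frac{\sin\beta}{r}+4\,u'(r^2+z^2)\,(z\cos\beta-r\sin\beta).
\end{equation}

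Next I solve \eqref{sys} with the initial data dictated by the free boundary: the curve starts at $(r_0,t_0)$ on the horizon circle, $r_0=\sqrt{R_0^2-t_0^2}$, with radial tangent, so $\beta(0)=\arctan(t_0/r_0)$. The right-hand side of \eqref{sys} is smooth on $\{r>0\}$, so Picard–Lindelöf yields a unique local solution. The case $t_0=0$ reproduces $\Sigma_0$ ($z\equiv0$, $\beta\equiv0$). For $t_0>0$ one has $\beta(0)>0$, and since a direct differentiation of the conformal factor gives $dh/d\rho<0$, hence $u'<0$ throughout $M^3_k$, at the first instant $\beta=0$ one computes $\beta'=4u'z\neq0$. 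Thus $k_2=\sin\beta/r$ vanishes there while $\overline k_1$ does not, so the shape operator $\overline{II}$ is not identically zero and $\Sigma_{t_0}$ is non-totally geodesic.

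Two identities from \eqref{sys} drive the global analysis. Setting $W:=r\,e^{2u(r^2+z^2)}\,(z\cos\beta-r\sin\beta)$ one checks the clean identity $W'=z\,e^{2u(r^2+z^2)}$, while the Euclidean flux obeys $\tfrac{d}{ds}(r\sin\beta)=4\,u'(r^2+z^2)\cos\beta\;W\,e^{-2u(r^2+z^2)}$. From $W(0)=0$ and $z(0)=t_0>0$, $W$ is positive for small $s>0$; together with $u'<0$ this forces $\beta'<0$, so $\beta$ decreases, $z$ attains a maximum when $\beta$ first reaches $0$, and thereafter $\beta<0$. The plan is to use these monotone quantities, with a priori bounds, to keep $\beta\in(-\tfrac\pi2,\tfrac\pi2)$ — so that $r$ is strictly increasing and $\gamma$ is a graph $z=f(r)$, whence $\Sigma_{t_0}$ is embedded and avoids the axis $r=0$ — and to rule out finite-arclength blow-up, extending the solution to all $s\ge0$ with $r\to\infty$ and $|\vec x|\ge R_0$. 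Properness then follows since $\gamma$ leaves every compact set.

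The decisive step, which I expect to be the main obstacle, is the unboundedness of the height, namely $z(s)\to-\infty$, equivalently the divergence of $\int_0^\infty\sin\beta\,ds$. As $|\vec x|\to\infty$ the metric is asymptotically flat, $u'(r^2+z^2)\to0$, and \eqref{sys} degenerates to the Euclidean catenoid equation $\beta'=-\sin\beta/r$, along which $r\sin\beta$ is conserved; since the corresponding forcing is integrable in $s$ (of order $|\vec x|^{-2}$ for $k=1$), the flux $c:=\lim_{s\to\infty}r(s)\sin\beta(s)$ exists, and $c\le 0$ because $r\sin\beta$ turns negative after the maximum of $z$. If $c\neq0$ then $\sin\beta\sim c/r$ with $r\sim s$, the end is asymptotic to a catenoid, and $z\sim c\log r\to-\infty$, giving the unbounded height. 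The crux is therefore to prove $c\neq0$, equivalently to exclude the degenerate alternative in which the end flattens onto a horizontal plane and $z$ stays bounded. Because \eqref{sys} has no exact first integral once the conformal factor is present, and the balance governing $c$ is convergent but borderline, establishing $c\neq0$ requires careful estimates: I would compare $\gamma$ with Euclidean catenoids of controlled nonzero flux (asymptotic sub/supersolutions) and exploit the fixed sign $u'<0$ together with the monotone quantity $W$ to propagate the strict initial inclination $\beta(0)>0$ into a nonzero asymptotic flux. Granting $c\neq0$, the surface $\Sigma_{t_0}$ is a properly embedded, non-totally-geodesic free-boundary minimal surface meeting $\partial M^3_k$ at height $t_0$ with unbounded height, as claimed.
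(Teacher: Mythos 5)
Your reduction of minimality to the tangent--angle system is equivalent to the paper's setup (the paper writes the profile as a graph $r=x(t)$ over the height axis and obtains $x''=\left[-4u'(tx'-x)+\frac{1}{x}\right](1+(x')^2)$, which is your system under $\sin\beta=1/\sqrt{1+(x')^2}$), and your identities $W'=ze^{2u}$ and $\frac{d}{ds}(r\sin\beta)=4u'\cos\beta\,We^{-2u}$ are correct: they encode the paper's monotone quantity $\Psi(t)=tx'(t)-x(t)$. But the qualitative picture you build on them is wrong. From $W>0$ and $u'<0$ you only get $\beta'<0$ while $\beta>0$; it does not follow that $\beta$ reaches $0$ in finite arclength, and in fact it never does. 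The paper's Lemma \ref{lema1bis} gives $\Psi'(t)=tx''(t)>0$, hence $x'\geq x/t>0$ and $x''>0$ on the whole maximal interval, so the profile remains a graph over the height axis with $z=t$ strictly increasing and $\beta=\arctan(1/x')\in(0,\pi/2)$ for all $s$. Consequently $z$ has no maximum, $r\sin\beta=x/\sqrt{1+(x')^2}$ stays positive (so your flux limit satisfies $c\geq 0$, not $c\leq 0$), and the end goes \emph{up}: the unbounded height in the theorem is $z\to+\infty$, with the radius growing at least exponentially in the height (from $x''\geq(1+(x')^2)/x$ one gets $x'\geq x/t_0$), not $z\to-\infty$ along a downward catenoidal end. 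Your non-totally-geodesic argument also rests on the nonexistent first zero of $\beta$; a correct one is immediate (at the boundary $\langle\vec x,N\rangle=0$ while $k_2=\sin\beta_0/r_0\neq0$, or Remark \ref{remark1}: a rotationally symmetric totally geodesic surface would be a horizontal plane, which is not free boundary).

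Independently of the sign error, you explicitly leave what you call ``the crux'' unproved: that the height is actually unbounded --- in your language that the asymptotic flux is nonzero, in the paper's that the maximal interval of the graph solution is all of $[t_0,+\infty)$. This is the entire content of Lemma \ref{lemma2}, which the paper proves by dominating the ODE by an autonomous one, substituting $x=e^{u}$, reducing order via $w=d\widetilde v/dt$, and integrating explicitly to $\widetilde v'=-\tan\left(ae^{-\widetilde v}+c_1\right)$, which keeps $\widetilde v'$ bounded and forces $t^*=+\infty$. A soft comparison with Euclidean catenoids of nonzero flux is not obviously sufficient: for $n\geq4$ the very same solutions \emph{do} flatten onto a horizontal hyperplane at finite height $h_0$ (Theorem \ref{newMAIN}), so any argument must genuinely use $n=3$. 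As written, the central ``moreover'' clause of the theorem is asserted rather than proved, and the preceding sign analysis points it in the wrong direction.
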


It is worth mention that it is not clear that one can perturb an unbounded minimal surface in the Euclidean space in order to obtain a minimal surface in the 3-dimensional Schwarzschild spaces $M^3_k$. An obstruction to a particular such deformation, considering catenoids, was demonstrated in \cite{CarMon}. Therefore, we apply a different approach. Our technique is more related to the one applied in the work of S. J. Kleene and N. M. Moller \cite{KleeMoller}.

It would be interesting to compute the Morse index of these free boundary minimal surfaces in the general Schwarzschild space $M^3_k$. For $k=1$, we know that this surfaces are not stable (see \cite{C16}). In section 3, we discuss on the Morse index of $\Sigma_0$ in $M_k^3$ and the total curvature of $\Sigma_{t_0}\subset M^3_k$.

For the $n$-dimensional general Riemannnian Schwarzschild space, $n\geq4$, the existence and the Morse index situation change. We give a brief description here. Let 
\[
C_{\Gamma}:=\{ \lambda y \, ; \,y \in \Gamma^{n-2}, \,\lambda\in (0,\infty)\}
\]
be a cone in $\mathbb{R}^{n}$, $n\geq 4$, with vertex at the origin, here $\Gamma^{n-2}$ is an embedded closed orientable minimal hypersurface in $\mathbb{S}^{n-1}$. We refer to $C_{\Gamma}$ as the {\it minimal cone over ${\Gamma}$}. Finally, consider $\Sigma_{\Gamma}=\{\vec{x}\in C_{\Gamma}\,;\,|\vec{x}|\geq R_{0} \}$. This  hypersurface is a properly embedded free boundary minimal hypersurface in $M^n$. Note that $\Sigma_0=\Sigma_{\Gamma_0}$ when $\Gamma_0$ is the equator in $\mathbb{S}^{n-1}$ given by the intersection $\mathbb{S}^{n-1}\cap H_0$, where $H_0=\{\vec{x}\in\mathbb{R}^{n}\,;\,\, x_{n}= c_0 \,\}$ is a hyperplane passing through the center of $\mathbb{S}^{n-1}$. In \cite{BarEsp}, the first named author and J. Espinar studied the Morse index of those minimal cones and proved that the Morse index is zero, for the totally geodesic hypersurface $\Sigma_0$ and, for the non-totally geodesic minimal cones $\Sigma_{\Gamma}$, the Morse index is either zero or infinite, depending on the dimension $n\geq 4$. In this same work, it was proposed a question on the existence or not of properly embedded free boundary minimal hypersurfaces in the Riemannnian Schwarzschild space, $n\geq 4$, with Morse index one (or, at least, finite non-zero Morse index). Here we deal with the question of existence of properly embedded free boundary minimal hypersurfaces non congruent to the cones $\Sigma_{\Gamma}$.

\begin{theorem}\label{newMAIN}
Let $M^n_k$ be a general Schwarzschild space with $n\geq4$. For each $t_0\in (0,R_0)$ there exists a proper rotationally symmetric free boundary minimal hypersurface $\Sigma_{t_0}$ in $M^n_k$, that intersects $\partial M^n$ at height $x_n=t_0$.

Moreover, $\Sigma_{t_0}$ is bounded above by the hyperplane of height $h_0$, where
\[
h_0=t_0+\int_{\sqrt{R_0^2-t_0^2}}^{+\infty}\frac{d\mu}{\sqrt{\left(\frac{R_0}{t_0}\right)^2\left(\frac{\mu}{\sqrt{R_0^2-t_0^2}}\right)^{2(n-2)}-1}}\,,
\]
and $\Sigma_{t_0}$ goes to $\Sigma_0$ as $t_0\rightarrow0$.
\end{theorem}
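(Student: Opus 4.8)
The plan is to reduce the minimal surface equation to a single second-order ODE for the generating profile curve, solve the associated free-boundary Cauchy problem, and then extract both the global existence and the height bound from a comparison with the corresponding Euclidean minimal hypersurface of revolution.

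First I would write a rotationally symmetric hypersurface about the $x_n$-axis as a graph $z=z(r)$ over the axial distance $r=\sqrt{x_1^2+\dots+x_{n-1}^2}$, so that $|\vec x|^2=r^2+z^2$. Plugging the Euclidean principal curvatures of such a graph into the conformal formula from the Introduction (with $h(\vec x)=u(|\vec x|^2)$), the condition $\overline H=0$ becomes
\[
\frac{z''}{1+(z')^2}+(n-2)\frac{z'}{r}=2(n-1)\,u'(r^2+z^2)\,(z-rz').
\]
The factor $z-rz'$ is a multiple of $\langle\vec x,N\rangle$, and setting the right-hand side to zero recovers the classical equation $r^{n-2}\,z'/\sqrt{1+(z')^2}=\mathrm{const}$ of a Euclidean catenoid; the total height of the catenoid with the relevant Cauchy data is precisely the integral defining $h_0$, which converges exactly because $n-2\ge 2$ (this is the sole place where the dichotomy with the divergent $n=3$ case of Theorem \ref{MAIN} enters).

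Next I would impose the free-boundary condition. Since $g_{Sch(k)}$ is conformal to the Euclidean metric, angles are preserved, so orthogonality to the horizon $\{|\vec x|=R_0\}$ is equivalent to the profile curve being radial at the contact point; prescribing contact height $t_0$ then forces the boundary radius $r_0:=\sqrt{R_0^2-t_0^2}$ and the Cauchy data $z(r_0)=t_0$, $z'(r_0)=t_0/r_0$, for which $z-rz'$ (hence $\langle\vec x,N\rangle$) vanishes. Because $r\ge r_0>0$ there is no axis singularity and the right-hand side is smooth, so standard ODE theory yields a unique maximal solution.

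The technical heart is to promote this to a global, proper solution and to prove the height bound simultaneously. A direct computation gives $u'<0$ throughout $M^n_k$ (with $u'(R_0^2)=-1/(2R_0^2)$, consistent with the horizon being totally geodesic), and differentiating the flux $F(r):=r^{n-2}z'/\sqrt{1+(z')^2}$ along a solution yields $F'=2(n-1)r^{n-2}u'(r^2+z^2)(z-rz')/\sqrt{1+(z')^2}$, which is $\le 0$ as long as $z-rz'\ge 0$. Since along the Euclidean catenoid $z_E-rz_E'$ increases from $0$, the forcing term is non-positive, so the Schwarzschild profile bends downward at least as much as $z_E$; making this rigorous by a comparison argument on $w:=z_E-z$ gives $z\le z_E$ and hence $z\le h_0$. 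The main obstacle is the coupled nature of this estimate: the sign of the forcing term depends on $z-rz'$ remaining nonnegative, while that quantity must itself be propagated from the ODE, so one has to control $F$, the slope $z'$, and the geometric quantity $z-rz'$ together in a bootstrap, both to rule out a premature vertical tangent and to guarantee that the generated hypersurface is properly embedded. Finally, sending $t_0\to 0$ drives the Cauchy data to $(z(R_0),z'(R_0))=(0,0)$, whose unique solution is $z\equiv 0$, and since $h_0\to 0$, continuous dependence on the data shows $\Sigma_{t_0}\to\Sigma_0$.
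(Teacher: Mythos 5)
Your proposal follows the same basic strategy as the paper: reduce to a second-order ODE for the rotationally symmetric profile, impose the free-boundary Cauchy data at $(\sqrt{R_0^2-t_0^2},t_0)$ (radial contact, using conformal invariance of angles), and bound the solution by the Euclidean minimal hypersurface of revolution through the same data, whose asymptotic height is exactly $h_0$ and is finite precisely when $n\geq4$. The difference is one of parametrization: the paper writes the radius as a function of height, $x=x(t)$, and obtains the comparison from the differential inequality $x''\geq\frac{n-2}{x}(1+x'^2)$ integrated in closed form (equations \eqref{main2}--\eqref{9}), whereas you write height as a function of radius, $z=z(r)$, and run the comparison through monotonicity of the flux $F=r^{n-2}z'/\sqrt{1+z'^2}$. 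These are dual formulations of the same estimate ($x\geq u$ in the paper is your $z\leq z_E$), and your sign analysis of $F'$ and of $\phi:=z-rz'$ (which is $\Psi=tx'-x$ of Lemma \ref{lema1bis} up to the positive factor $1/z'$) is correct as far as it goes.

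The genuine gap is in the bootstrap you defer. In your parametrization the degeneration you name --- a vertical tangent --- is actually the easy one: $F\leq F(r_0)$ gives $z'/\sqrt{1+z'^2}\leq t_0/R_0<1$, hence a uniform bound $z'\leq t_0/\sqrt{R_0^2-t_0^2}$. The dangerous degeneration is the opposite one: since $\phi\geq0$ and $u'<0$ force $z''<0$, the slope $z'$ is strictly decreasing, and nothing among the quantities you propagate ($F$, $z'$, $\phi$) prevents $z'$ from reaching $0$ at some finite $r_1$; the ODE is perfectly regular there ($z''(r_1)=2(n-1)u'\,z(r_1)<0$), after which the profile turns back down and embeddedness, properness, and the identification of the ``height'' all break. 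A lower bound on $F$ via $|F'|\lesssim r^{n-2}|u'|\,\phi\lesssim r^{-2}$ is integrable but not obviously smaller than $F(r_0)$, so it does not close the argument. This is exactly where the paper's choice of parametrization earns its keep: there $\Psi=tx'-x\geq0$ immediately yields $x'\geq x/t>0$ on the whole maximal interval, which in your variables is the statement that $z'>0$ persists and $z'\to0$ only as $r\to\infty$. You should either carry out this step in the dual parametrization or supply a substitute monotone quantity (e.g.\ $\Psi=\phi/z'$) that controls $z'$ from below. The final limit $\Sigma_{t_0}\to\Sigma_0$ via continuous dependence is stated at the same level of detail as in the paper and is acceptable.
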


In \cite{CarMon}, A. Carlotto and A. Mondino showed the existence of catenoidal minimal hypersurfaces in asymptotically Schwarzschildean spaces, in this case of higher dimensions, by perturbing the Euclidean catenoids. In our case, our minimal hypersurfaces are rotationally symmetric catenoidal type minimal hypersurfaces.  

In a slightly general situation, if $u$ is a differentiable function with  $u'(|\vec{x}|^2)<0$ and $-u'(|\vec{x}|^{2})\leq\frac{c}{|\vec{x}|^{\alpha}}$ for some positive constants $c$ and $\alpha$, $\alpha\geq2$, consider a conformal change $\overline{g}=e^{2h}\langle,\rangle$, where $h=u(|\vec{x}|^2)$, on the $n$-dimensional domain, $n\geq3$,
\[
M^{n}:=\left\{\vec{x}\in\mathbb{R}^{n}\,;\,\, |\vec{x}|\geq r_{0} \right\}\,,
\]
$r_0>0$. In this situation, we obtain the following existence result.

\begin{theorem}\label{newMAIN2}
For each $t_0\in (0,r_0)$ there exists a proper rotationally symmetric free boundary minimal hypersurface $\Sigma_{t_0}$ in $(M^n,\overline{g})$, that intersects $\partial M^n$ at height $x_n=t_0$. Moreover:

\begin{itemize}
\item[1.] if $n=3$, the height of $\Sigma_{t_0}$ with respect to the $x_3=0$ plane is unbounded.  

\item[2.] If $n\geq4$, $\Sigma_{t_0}$ is bounded above by the hyperplane of height $h_0$, where
\[
h_0=t_0+\int_{\sqrt{r_0^2-t_0^2}}^{+\infty}\frac{d\mu}{\sqrt{\left(\frac{r_0}{t_0}\right)^2\left(\frac{\mu}{\sqrt{r_0^2-t_0^2}}\right)^{2(n-2)}-1}}\,,
\]
and $\Sigma_{t_0}$ goes to $\Sigma_0$ as $t_0\rightarrow0$.
\end{itemize}
\end{theorem}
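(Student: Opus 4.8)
The plan is to reduce the construction of $\Sigma_{t_0}$ to one ordinary differential equation for the profile of a hypersurface of revolution, and then to analyse that equation by comparison with the Euclidean catenoid carrying the same boundary data.

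Writing $\vec{x}=(\vec{y},x_n)$ with $\vec{y}\in\R^{n-1}$ and $\rho=\abs{\vec{y}}$, I look for $\Sigma_{t_0}$ as the hypersurface of revolution about the $x_n$-axis given by the radial graph $x_n=\varphi(\rho)$, $\rho\geq\rho_0:=\sqrt{r_0^2-t_0^2}$. Using the stated relation $\overline{k}_i=e^{-h}\big(k_i-2u'(\abs{\vec{x}}^2)\langle\vec{x},N\rangle\big)$, minimality for $\overline{g}$ is equivalent to the Euclidean prescribed mean curvature equation
\begin{equation}\label{E:pmc}
\mathrm{div}\!\left(\frac{\nabla\varphi}{\sqrt{1+\abs{\nabla\varphi}^2}}\right)=2(n-1)\,u'(\abs{\vec{x}}^2)\,\langle\vec{x},N\rangle .
\end{equation}
Setting $\sin\theta=\varphi'/\sqrt{1+(\varphi')^2}$ for the inclination angle $\theta$ of the profile, \eqref{E:pmc} becomes the first order system
\begin{equation}\label{E:sys}
\frac{dx_n}{d\rho}=\tan\theta,\qquad
\frac{d}{d\rho}\big(\rho^{\,n-2}\sin\theta\big)=2(n-1)\,\rho^{\,n-2}\,u'(\rho^2+x_n^2)\,\big(x_n\cos\theta-\rho\sin\theta\big),
\end{equation}
where $\langle\vec{x},N\rangle=x_n\cos\theta-\rho\sin\theta$. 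The free boundary condition forces the profile to meet $\partial M^n$ radially, which fixes the initial data $\rho=\rho_0$, $x_n=t_0$, $\theta=\theta_0:=\arcsin(t_0/r_0)\in(0,\pi/2)$ and makes $\langle\vec{x},N\rangle$ vanish there.

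Because the free boundary condition leaves no shooting parameter, existence of $\Sigma_{t_0}$ amounts to solving the initial value problem \eqref{E:sys} and continuing the solution out to $\rho=+\infty$ with $\theta\in(-\pi/2,\pi/2)$, so that $\rho$ remains a global graph coordinate and $\Sigma_{t_0}$ is a properly embedded graph. Local existence and uniqueness are clear since the right hand side is smooth for $\rho>0$, and the only way continuation can fail is $\theta\to\pm\pi/2$. The comparison model is the Euclidean catenoid $\varphi_E$ (the solution of \eqref{E:sys} with $u\equiv0$ and the same data): its flux $\rho^{\,n-2}\sin\theta_E$ is constant, and integrating $\varphi_E'$ gives precisely the value $h_0$ of the statement as its limiting height. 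The monitor for the general case is $G(\rho):=\rho^{\,n-2}\sin\theta$. Differentiating $\langle\vec{x},N\rangle$ along the profile shows that at any zero of $\langle\vec{x},N\rangle$ with $\theta\in(0,\pi/2)$ and $x_n>0$ its derivative is positive; hence $\langle\vec{x},N\rangle$ stays nonnegative as long as $\theta$ stays in $(0,\pi/2)$, and then $u'<0$ forces $G$ to be nonincreasing, so $G\leq G_E$. This sandwiches the profile, $\varphi\leq\varphi_E$, and for $n\geq4$ gives the bound $x_n\leq h_0$.

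The step I expect to be the main obstacle is the global qualitative control of \eqref{E:sys}, and this is where the decay hypothesis $-u'(\abs{\vec{x}}^2)\leq c\,\abs{\vec{x}}^{-\alpha}$, $\alpha\geq2$, is used in an essential way. Although the perturbation in \eqref{E:sys} always pushes $\theta$ downward, I must show it is too weak to send $\theta$ to $\pm\pi/2$ at a finite radius (so the surface stays graphical and proper), and I must read off the behaviour of $\theta(\rho)$ as $\rho\to\infty$. The dimensional dichotomy surfaces exactly here: for $n\geq4$ the flux bound $G\leq G_E$ already forces $\varphi$ below the finite Euclidean height $h_0$; for $n=3$ one must instead prove that $\theta$ decays so slowly that $\int^{\infty}\tan\theta\,d\rho=+\infty$, i.e. that the height is unbounded, and the borderline rate $\alpha\geq2$ is what keeps the perturbation integrable enough not to overwhelm the logarithmic growth of the height already present for the Euclidean catenoid. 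I would realise this control by trapping the trajectory between explicit barriers built from the Euclidean catenoid and from the decay bound on $u'$, in the spirit of the phase plane analysis of Kleene and Moller.

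Finally, the limit statement follows from continuous dependence of \eqref{E:sys} on its initial data. As $t_0\to0$ one has $\theta_0\to0$, $\rho_0\to r_0$ and $h_0\to0$, so the solutions converge in $C^1_{\mathrm{loc}}$ to the stationary solution $\theta\equiv0$, $x_n\equiv0$; that is, $\Sigma_{t_0}$ converges to the totally geodesic slice $\Sigma_0=\{x_n=0,\ \abs{\vec{x}}\geq r_0\}$.
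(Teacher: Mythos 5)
Your reduction to the profile ODE, the free boundary initial data, and the observation that $\langle\vec{x},N\rangle$ stays nonnegative are all correct and are exactly the paper's starting point, just written in the inverse parametrization: the paper takes the radius $x$ as a function of the height $t$ and works with $\Psi(t)=tx'(t)-x(t)$, which is $\sqrt{1+x'^2}\,\langle\vec{x},N\rangle$, showing $\Psi'=tx''>0$ so $\Psi\geq 0$; your flux monitor $G(\rho)=\rho^{n-2}\sin\theta$ with $G'\leq 0$ is the same monotonicity seen from the graph-over-radius side. For $n\geq 4$ your comparison $G\leq G_E$, hence $\varphi\leq\varphi_E$, is a clean and correct way to get the bound by $h_0$; the paper gets the identical conclusion by noting $x''\geq\frac{n-2}{x}(1+x'^2)$ and integrating the Euclidean catenoid equation explicitly (its formula \eqref{9} is your $\varphi_E$). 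So the existence and the $n\geq4$ height bound are fine, modulo the point below.

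The genuine gap is in item 1 ($n=3$, unbounded height) and, relatedly, in the global control of $\theta$. Your flux inequality only gives an \emph{upper} barrier: since $u'<0$ and $\langle\vec{x},N\rangle\geq 0$, the perturbation pushes $G$ (hence $\theta$, hence the height) \emph{down} relative to the Euclidean catenoid. What the theorem requires for $n=3$ is the opposite control: that $\theta$ does not decay too fast (and in particular does not cross $0$ and head toward $-\pi/2$, which would fold the surface and stop the height from growing). You defer this to ``barriers built from the Euclidean catenoid and from the decay bound on $u'$,'' but the naive estimate does not close: with $n=3$, $\alpha=2$ and $x_n=O(\log\rho)$ one only gets $G'\geq -C\,(1+\log\rho)/\rho\cdot\rho = -C(1+\log\rho/\rho)\cdot{}$(order one), so $G\geq G(\rho_0)-C(\log\rho)^2$, which does not keep $\sin\theta=G/\rho$ away from $-1$ in any obvious quantitative way, and certainly does not show $\int^\infty\tan\theta\,d\rho=+\infty$. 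This is precisely where the paper invests its main technical effort (Lemmas \ref{lemma2} and \ref{lemma2bis}): it bounds the ODE by an autonomous majorant, substitutes $x=e^{u}$, reduces to a first-order equation $w\dot w=a(w^3+w)e^{-\widetilde{v}}$ solvable in closed form as $\widetilde{v}'=-\tan(ae^{-\widetilde{v}}+c_1)$, and deduces that the slope tends to a finite constant, so the solution exists for all heights $t\geq t_0$ — equivalently, $\theta$ stays positive and the height is unbounded. Your proposal identifies this as ``the main obstacle'' but supplies no substitute for that argument, so as written the $n=3$ statement (and the properness/graphicality of $\Sigma_{t_0}$ in general) is not proved.
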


For instance, consider the cylinder $\mathbb{S}^2\times \mathbb{R}$ with the usual product metric. It is isometric to the space $(\mathbb{R}^3\setminus\{ 0\}, \overline{g})$, where $\overline{g}=e^{2h}\langle,\rangle$ and $h(|\vec{x}|)=u(|\vec{x}|^2)=\frac{1}{2}\ln\left( \frac{1}{|\vec{x}|^2}\right)$.  The isometry $F:\mathbb{R}^3\setminus\{ 0\}\rightarrow \mathbb{S}^2\times \mathbb{R}$ is given by 
\[
F(\vec{x})=(|\vec{x}|^{-1}\vec{x},\ln (|\vec{x}|))\,.
\]
Note that $u'(|\vec{x}|^2)<0$ and $-u'(|\vec{x}|^{2})\leq\frac{1}{|\vec{x}|^{2}}$. We can see that, for all $r_0>0$, the sphere $S_{r_0}=\{\vec{x}\in\mathbb{R}^{3}\,;\,\, |\vec{x}|=r_{0} \}$ is a closed totally geodesic hypersurface in $(M^3,\overline{g})$. Hence, for every $t_0\in (0,r_0)$ there exists a proper rotationally symmetric free boundary minimal hypersurface $\Sigma_{t_0}$ in $(M^3,\overline{g})$, where $M^3=\left\{\vec{x}\in\mathbb{R}^{3}\,;\,\, |\vec{x}|\geq r_{0} \right\}\,$. Note that the surface $\Sigma_{t_0}$ is not totally geodesic. Then it is not a surface as $\gamma\times [r_0,+\infty)$ where $\gamma$ is a geodesic in $\mathbb{S}^2$. 

Also, consider a real number $\beta\in ]0, \frac{1}{2}]$ and $h(r)=-\beta\ln(1 + r^2)$. Then the metric $\overline{g}=e^{2h}\langle,\rangle$ is a complete metric on $\mathbb{R}^n$. Note that $u'(|\vec{x}|^2)=-\frac{\beta}{1+|\vec{x}|^2}<0$ and $-u'(|\vec{x}|^{2})\leq\frac{\beta}{|\vec{x}|^{2}}$. Hence, for every $t_0\in (0,r_0)$ there exists a proper rotationally symmetric free boundary minimal hypersurface $\Sigma_{t_0}$ in $(M^n,\overline{g})$, where $M^n=\left\{\vec{x}\in\mathbb{R}^{n}\,;\,\, |\vec{x}|\geq r_{0} \right\}\,$. 

The same technique we use in the Schwarzschild space could be applied to prove that if we consider a hyperplane in $\mathbb{R}^n$ then there exists rotationally symmetric minimal hypersurfaces, with respect to the metric $\overline{g}$, contained in a halfspace, embedded and with boundary on the hyperplane that defines the halfspace in $\mathbb{R}^n$.

\section{Rotationally Symmetric Free-Boundary Minimal Hypersurfaces
}
Let $x:I\rightarrow \R$ be a smooth function with $x(t)>0$, $\forall t\in I$, where $I\subset \R$ is an open interval, such that the curve
\[
\beta(t)=(x(t),0,\dots,0,t)
\]
is a graph which is not necessarily arclenght parametrized. Consider the hypersurface $\Sigma\subset (\R^n,\langle,\rangle)$ obtained by revolution of the curve $\beta$ around the $x_n$-axis, whose parametrization $\vec{x}=X:[0,+\infty)\times \mathbb{S}^{n-2}\rightarrow \R^n$ is given by
\begin{equation}\label{1}
X(t,\theta)=(x(t)\theta,t),
\end{equation}
where $\theta$ denotes a point in $\mathbb{S}^{n-2}$. Consider in $\Sigma$ the orientation:
\[
N=\frac{1}{\sqrt{x'(t)^2+1}}(-\theta,x'(t)).
\]
The principal curvatures of $\Sigma$ at a point $(t,\theta)$ are given by
\begin{equation}\label{curvatures}
k_1=-\frac{x''(t)}{\sqrt{(1+x'(t)^2)^3}}, \, k_2=k_3=\dots=k_{n-1}=\frac{1}{x(t)\sqrt{1+x'(t)^2}}.
\end{equation}

Let $u:\R \rightarrow \R$ be a differentiable function and $h:\R^n\rightarrow \R$ be given by $h(x)=u(|x|^2)$, under conformal change $\overline{g}=e^{2h}\langle,\rangle$, the principal curvatures $\overline{k}_i$ become
\begin{equation}\label{2}
\overline{k}_i=e^{-h}(k_i-N(h)) \text{ for } i=1,\dots,n-1,
\end{equation}
where $N(h)=\langle\nabla h,N\rangle$. As $h=u(|\vec{x}|^2)$, we obtain that
\[
\nabla h=2u'(|\vec{x}|^2)\vec{x},
\]
and we can write the equation \eqref{2} as
\[
\overline{k}_i=e^{-h}\left(k_i-2u'(|\vec{x}|^2)\langle \vec{x},N\rangle\right)\qquad i=1,\dots,n-1.
\]

If \eqref{1} is a parametrization of a minimal hypersurface in $(\R^n\setminus B_{R_0},\overline{g})$, then the above equation provides
\begin{equation}\label{3}
0=H-2(n-1)u'(|\vec{x}|^2)\langle \vec{x},N\rangle,
\end{equation}
where $H$ stands for the mean curvature at a point $\vec{x}\in \Sigma$.

By considering the expression of the normal vector $N$ and the position vector $\vec{x}$ above, we have
\begin{equation}\label{4}
\langle \vec{x},N\rangle=\frac{x'(t)t-x(t)}{\sqrt{1+x'(t)^2}}.
\end{equation}

Now, using the expression of $k_i$ and \eqref{4}, we can rewrite \eqref{3} as
\begin{equation}\label{5}
2(n-1)u'(|\vec{x}|^2)\frac{tx'(t)-x(t)}{\sqrt{1+x'(t)^2}}=-\frac{x''(t)}{\sqrt{(1+x'(t)^2)^3}}+\frac{n-2}{x(t)\sqrt{1+x'(t)^2}}.
\end{equation}

Hence, we obtain from \eqref{5} that $x(t)$ is a solution to the second order ODE
\begin{equation}\label{6}
x''(t)=\left[-2(n-1)u'(|\vec{x}|^2)(x'(t)t-x(t))+\frac{n-2}{x(t)}\right](1+x'(t)^2).
\end{equation}

Thus, there is a correspondence between rotationally symmetric free boundary minimal hypersurfaces in $(\R^n\setminus B_{R_0},\overline{g})$ parametrized by  \eqref{1} and solutions to the differential equation \eqref{6} given the initial conditions of interest.

For the sake of clarity, we develop all the computations supossing $\overline{g}$ is the Schwarzschild metric, i.e. we take $\overline{g}=g_{Sch(k)}$ with $k=1$. Then, we need to find a solution to the equation \eqref{6} with the initial condition $x(t_0)=\sqrt{R_0^2-t_0^2}$, $x'(t_0)t_0=x(t_0)$ where $t_0\in (0,R_0)$.  The equation \eqref{6} says that $\Sigma$ is a minimal hypersurface in the Schwarszchild space $(\R^n\setminus B_{R_0},\overline{g})$, and the conditions $x(t_0)=\sqrt{R_0^2-t_0^2}$, $x'(t_0)t_0=x(t_0)$ where $t_0\in (0,R_0)$ say that $\Sigma$ is free boundary (see Figure 1).
\begin{figure}[h]
\includegraphics[scale=0.4]{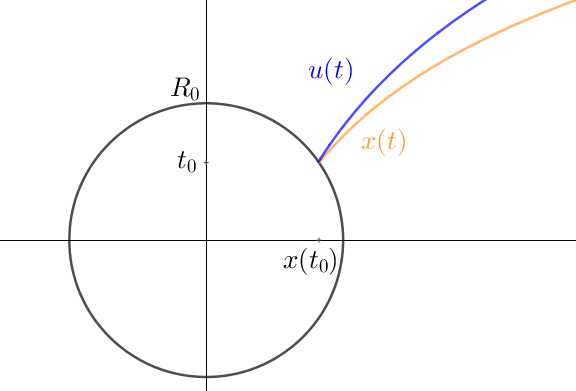}
\caption{}
\end{figure}

As $u(|\vec{x}|^2)=\frac{2}{n-2}\ln\left(1+\frac{m}{2|\vec{x}|^{n-2}}\right)$, we obtain
\[
u'(|\vec{x}|^2)=-\frac{m}{|\vec{x}|^2(2|\vec{x}|^{n-2}+m)}
\]
and, consequently,
\begin{equation}\label{6.25}
u'(|\vec{x}|^2)=-\frac{m}{2(x(t)^2+t^2)^{n}+m(x(t)^2+t^2)},
\end{equation}
for all $\vec{x}\in \Sigma$ (note that $|\vec{x}|^2=x(t)^2+t^2$, if $\vec{x}\in \Sigma$).

Now set
\[
\chi(x,t)=\frac{2m(n-1)}{2(x^2+t^2)^{n}+m(x^2+t^2)},
\]
then we can rewrite \eqref{6} as
\begin{equation}\label{main}
 x''(t)=\left[(x'(t)t-x(t))\chi(x(t),t)+\frac{n-2}{x(t)}\right](1+x'(t)^2).
\end{equation}

\begin{lemma}\label{lema1bis}
Let $\chi$ as above, then the solutions of \eqref{main} verifies that $x(t)$ goes to infinity as $t$ goes to $t^*$,
where $t^*\in \R\cup \{+\infty\}$ is chosen such that $(t_0,t^*)$ is the  maximal interval of existence of $x$. 
\end{lemma}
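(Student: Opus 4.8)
The plan is to extract enough monotonicity from \eqref{main} to force $x$ to be increasing and convex, and then to rule out every way the graph could fail to escape to infinite radius.

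First I would control the sign of the support-type function $\phi(t):=x'(t)t-x(t)$, which by \eqref{4} equals $\langle\vec x,N\rangle\sqrt{1+x'(t)^2}$. The free-boundary condition gives $\phi(t_0)=0$, and differentiating yields the clean identity $\phi'(t)=t\,x''(t)$. At $t_0$ equation \eqref{main} gives $x''(t_0)=\tfrac{n-2}{x(t_0)}(1+x'(t_0)^2)>0$, so $\phi$ is initially increasing from $0$. I would then run a first-contact argument: if $t_1>t_0$ were the first zero of $\phi$, then $\phi(t_1)=0$ forces $x''(t_1)=\tfrac{n-2}{x(t_1)}(1+x'(t_1)^2)>0$ through \eqref{main}, hence $\phi'(t_1)=t_1x''(t_1)>0$, contradicting $\phi'(t_1)\le 0$. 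Therefore $\phi>0$ on all of $(t_0,t^*)$. Since $\chi>0$ and $\tfrac{n-2}{x}>0$, the bracket in \eqref{main} is strictly positive, so $x''>0$: the solution is strictly convex, and $x'(t)>x(t)/t>0$, so $x$ is strictly increasing. In particular the limit $L:=\lim_{t\to t^*}x(t)\in(x(t_0),+\infty]$ exists.

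Next I would argue by contradiction, assuming $L<+\infty$. If $t^*=+\infty$, convexity and $x'(t_0)>0$ give $x(t)\ge x(t_0)+x'(t_0)(t-t_0)\to+\infty$, contradicting $L<\infty$; hence necessarily $t^*<\infty$. With $x$ confined to the compact set $[x(t_0),L]\subset(0,\infty)$ and the right-hand side of \eqref{main} smooth there, the only obstruction to continuing the solution past $t^*$ is that $(x,x')$ leaves every compact subset of $\R\times(0,\infty)$; since $x'$ is increasing this means $x'(t)\to+\infty$ as $t\to t^*$. Thus the whole lemma reduces to excluding a vertical tangent at finite radius.

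This last exclusion is the main obstacle, because a local analysis of \eqref{main} near such a point is perfectly self-consistent: the dominant balance $x''\sim t^*\chi(L,t^*)\,x'^{3}$ is compatible with $x'\sim(t^*-t)^{-1/2}$ and $x\to L<\infty$, so no purely local argument can succeed. To defeat it I would upgrade the Euclidean first integral to a monotone quantity. Setting $E(t):=\dfrac{1+x'(t)^2}{x(t)^{2(n-2)}}$, a direct computation using \eqref{main} gives
\[
E'(t)=\frac{2x'(t)}{x(t)^{2(n-2)}}\,\phi(t)\,\chi(x(t),t)\,(1+x'(t)^2)\ge 0,
\]
so $E$ is nondecreasing with $E\ge E(t_0)>0$; when $\chi\equiv 0$ this is exactly the conserved energy of the Euclidean catenoid, whose level sets $x'=\sqrt{E\,x^{2(n-2)}-1}$ have no finite-radius vertical tangent. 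Reparametrizing by $x$ turns the inequality into $\tfrac{d}{dx}\log E=2\phi\chi$, and I would combine the pointwise bound $\phi=\langle\vec x,N\rangle\sqrt{1+x'^2}\le\abs{\vec x}\sqrt{1+x'^2}$ with the decay $\chi\lesssim\abs{\vec x}^{-\alpha}$ coming from $-u'(\abs{\vec x}^2)\le c\,\abs{\vec x}^{-\alpha}$ (here $\alpha=n$ for the Schwarzschild $\chi$) to obtain a Riccati differential inequality of the form $\tfrac{d}{dx}E^{-1/2}\ge -C\,x^{\,n-1-\alpha}$ with $C$ independent of $x$. Integrating this keeps $E^{-1/2}$ bounded away from $0$, hence $E$ finite, on every finite $x$-interval, so $x'=\sqrt{E\,x^{2(n-2)}-1}$ cannot blow up while $x$ stays bounded; this contradicts $x'(t)\to+\infty$ with $L<\infty$, and therefore $L=+\infty$. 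The delicate point I expect to require the most care is making this step uniform in $t_0\in(0,R_0)$: the estimate $\langle\vec x,N\rangle\le\abs{\vec x}$ is wasteful precisely where $x'$ is large, so I anticipate needing the sharper behavior $\langle\vec x,N\rangle=\tfrac{x't-x}{\sqrt{1+x'^2}}\to t^*$ near a prospective singularity in order to close the argument for all initial heights rather than only under a smallness condition on the data.
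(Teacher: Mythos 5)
Your first two paragraphs reproduce, in a more carefully argued form, exactly what the paper does: the first--contact argument for $\Psi(t)=tx'(t)-x(t)$ (your $\phi$), giving $\Psi>0$, $x'>x/t>0$ and $x''>0$ on $(t_0,t^*)$, and the disposal of the case $t^*=+\infty$ with $\lim x<+\infty$ (your convexity argument $x(t)\geq x(t_0)+x'(t_0)(t-t_0)$ is in fact cleaner than the paper's, which instead extracts $x''\geq\varepsilon$ from the equation). You have also correctly isolated the only remaining scenario, namely $t^*<+\infty$ with $x'\to+\infty$ while $x\to L<+\infty$ (a vertical tangent at finite radius), and correctly observed that the local balance $x''\sim t^*\chi(L,t^*)x'^3$ makes this scenario impossible to exclude by any purely local analysis. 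Be aware that the paper itself dismisses this case in one unjustified sentence (``If $t^*<+\infty$ then since $x,x'>0$ it follows that $x$ diverges''), so on this point your write-up identifies a difficulty that the paper's own proof does not address.

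The problem is that your proposed resolution of that case does not close, and your final sentences essentially concede this. The monotone quantity $E=(1+x'^2)x^{-2(n-2)}$ and the identity $\tfrac{d}{dx}E^{-1/2}=-E^{-1/2}\Psi\chi=-x^{n-2}\langle\vec{x},N\rangle\,\chi$ are correct, but integrating $\tfrac{d}{dx}E^{-1/2}\geq -C\,x^{n-1-\alpha}$ only gives $E^{-1/2}(x)\geq E^{-1/2}(x_0)-C\int_{x_0}^{x}s^{n-1-\alpha}\,ds$, and this lower bound is positive \emph{only if} the initial value $E^{-1/2}(x_0)=t_0x_0^{n-2}/R_0$ dominates the integral term. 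Since $E^{-1/2}(x_0)\to 0$ as $t_0\to 0$ while the integral term stays bounded away from zero, the assertion that ``$E^{-1/2}$ stays bounded away from $0$ on every finite $x$-interval'' is false as stated for small $t_0$. The sharper bound $\langle\vec{x},N\rangle\leq t\leq t^*$ that you anticipate needing improves the integrand to $O(t^*x^{-2})$ but still leaves a condition of the form $t_0x_0^{n-1}\gtrsim t^*$, which again degenerates as $t_0\to 0$ (and $t^*$ is not a priori controlled when $n=3$); moreover, a tangency comparison with the hyperplanes $\{x_n=\mathrm{const}\}$ does not rescue it, because in the Schwarzschild metric those hyperplanes are mean-convex in the wrong direction to serve as barriers from above. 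So the exclusion of the finite-radius vertical tangent --- which is the entire content of the lemma when $t^*<+\infty$ --- remains unproved in your argument for general $t_0\in(0,R_0)$, and a genuinely new estimate is needed there.
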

\begin{proof}
First, define $\Psi(t):=t x'(t)-x(t)$, we note that the initial conditions of $x$ are equivalent to $\Psi(t_0)=0$. Taking derivatives, we have
\begin{equation}\label{6.5}
\begin{split}
\Psi'(t)&=t x''(t)=t\left(\Psi(t)\chi(x(t),t)+\frac{n-2}{x(t)}\right)(1+x'(t)^2)\\
&>t\Psi(t)\chi(x(t),t)(1+x'(t)^2)\geq 0.
\end{split}
\end{equation}

Thus, $\Psi'>0$ and $\Psi\geq 0$, and we get $x'(t)\geq x(t)/t>0$. Consequently, $x$ is strictly increasing and can not happend that $x(t)<0$ for certain $t\in (t_0,t^*)$. Therefore, either the limit of $x$ is a constant as $t$ approaches $t^*$ or $x$ diverges when $t$ reaches $t^*$.

If $t^*<+\infty$ then since $x,x'>0$ if follows that $x$ diverges as $t$ approaches $t^*$. In the case of $t^*=+\infty$ we have that the limit of $x$ can not be a constant. If it was, by using \eqref{main} we would have $x''\geq\varepsilon$ for certain $\varepsilon>0$. In particular the limit of $x'$ would aproach infinity as $t$ goes to infinity which is a contradiction. Therefore in this second case $x$ also diverges.
\end{proof}

\begin{lemma}\label{lemma2}
For $n=3$ the solution of \eqref{main} is defined in $[t_0,+\infty)$.
\end{lemma}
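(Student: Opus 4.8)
The plan is to argue by contradiction. Suppose the maximal interval is $(t_0,t^*)$ with $t^*<+\infty$. By Lemma \ref{lema1bis} we then have $x(t)\to+\infty$ as $t\uparrow t^*$, so the profile becomes vertical and escapes to infinity at the finite height $t^*$. Since $x$ is strictly increasing we may use $x$ as independent variable and write $t^*-t_0=\int_{x_0}^{\infty}\frac{dx}{x'}$ (with $x_0=\sqrt{R_0^2-t_0^2}$), so that $t^*<+\infty$ is \emph{equivalent} to the convergence of this integral. Hence it suffices to establish a growth bound of the form $1+x'(t)^2\le C\,x(t)^2$ on all of $(t_0,t^*)$: this gives $x'\le\sqrt{C}\,x$ and therefore $\int_{x_0}^{\infty}\frac{dx}{x'}\ge \tfrac{1}{\sqrt C}\int_{x_0}^{\infty}\frac{dx}{x}=+\infty$, contradicting $t^*<+\infty$. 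This is precisely the borderline feature of $n=3$: for the Euclidean catenoid ($m=0$) one has the exact first integral $1+x'^2=V_0\,x^2$, whose height integral diverges only logarithmically, whereas for $n\ge 4$ it converges and produces the finite height $h_0$ of Theorem \ref{newMAIN}.

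To produce such a bound I would track the Euclidean flux $F:=\dfrac{x}{\sqrt{1+x'^2}}$. Using \eqref{main} and $\langle\vec x,N\rangle=\dfrac{tx'-x}{\sqrt{1+x'^2}}$ from \eqref{4}, a direct computation gives the monotonicity identity
\[
\frac{dF}{dt}=-\,x\,x'\,\chi(x,t)\,\langle\vec x,N\rangle\le 0,\qquad\text{equivalently}\qquad \frac{d}{dx}\ln\frac{1+x'^2}{x^2}=2\,(tx'-x)\,\chi(x,t),
\]
where $tx'-x\ge 0$ and $(tx'-x)|_{t_0}=0$ by Lemma \ref{lema1bis}. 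Thus $F$ is non-increasing, and a finite $t^*$ can occur only if $F\downarrow 0$. The whole problem therefore reduces to bounding the total decay of $F$, i.e. to showing $\int_{x_0}^{\infty}(tx'-x)\,\chi\,dx<+\infty$ with $\inf F>0$ (or at least with $F$ decaying slowly enough that $\int F\,dx/x$ still diverges).

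The estimate rests on the fast decay of the conformal factor. For $n=3$ one has $\chi(x,t)\le \dfrac{2m}{|\vec x|^{6}}$, so $\rho:=|\vec x|$ is a convenient variable ($\rho'>0$). Combining $x\,x'\le \rho\,\rho'$ with the pointwise bound $\langle\vec x,N\rangle\le\rho$ gives $-\,dF/d\rho\le 2m\,\rho^{-4}$, whence
\[
F(\infty)\ge F(t_0)-\frac{2m}{3R_0^{3}},\qquad F(t_0)=\frac{t_0\sqrt{R_0^{2}-t_0^{2}}}{R_0}.
\]
This already settles the lemma for every $t_0$ with $F(t_0)>\frac{2m}{3R_0^{3}}$, and the routine large-$\rho$ decay $\rho^{-4}$ poses no difficulty.

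I expect the main obstacle to be the complementary range where $t_0$ is close to $R_0$, i.e. where the starting circle sits deep in the strongly curved region near the horizon: there $\chi$ is of order $R_0^{-2}$ (indeed $\chi=8/m^{2}$ on $S_0$) while $F(t_0)\to 0$, so the crude bound $\langle\vec x,N\rangle\le|\vec x|$ is hopelessly lossy near the initial point, where in fact $\langle\vec x,N\rangle(t_0)=0$. To close the argument one must exploit that the support function $\sigma:=\langle\vec x,N\rangle$ leaves the boundary at $0$ and grows in a controlled way; differentiating \eqref{4} along the solution yields the companion equation
\[
\frac{d\sigma}{d\rho}=\chi\,\rho\,\sigma+\frac{\rho\,F}{x^{2}},
\]
a linear ODE whose integrating factor $\exp\!\big(\int\chi\rho\,d\rho\big)$ is bounded because $\int_{R_0}^{\infty}\chi\rho\,d\rho<\infty$. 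Feeding the resulting control of $\sigma$ back into the flux identity and running a continuity (bootstrap) argument on the set $\{F\ge\tfrac12 F(t_0)\}$ should give $\int_{x_0}^{\infty}(tx'-x)\chi\,dx<\infty$, hence $\inf F>0$ and the contradiction. It is this coupled near-horizon estimate, rather than the decay at infinity, that I anticipate to be delicate, precisely because $n=3$ is the critical dimension in which the model height integral barely diverges.
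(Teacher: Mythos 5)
Your flux identity is correct: with $F:=x/\sqrt{1+x'^2}$, equation \eqref{main} together with \eqref{4} does give $F'(t)=-x\,x'\,\chi(x,t)\,\langle\vec x,N\rangle\le 0$, and the crude bounds $x x'\le\rho\rho'$, $\langle\vec x,N\rangle\le\rho$, $\chi\le 2m\rho^{-6}$ (with $\rho=|\vec x|$) do yield $F(\infty)\ge F(t_0)-\frac{2m}{3R_0^{3}}$. This is a genuinely different mechanism from the paper's, which instead majorizes \eqref{main} by an autonomous equation, sets $x=e^{u}$, and integrates the comparison ODE in closed form (reduction of order $w=d\widetilde v/dt$ as a function of $\widetilde v$, giving $\widetilde v'=-\tan(ae^{-\widetilde v}+c_1)$); since $\widetilde v'$ stays bounded, $x$ grows at most exponentially and cannot blow up in finite time, \emph{for every} $t_0\in(0,R_0)$ at once. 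Your argument, as written, only settles the lemma on the range of $t_0$ where $F(t_0)=t_0\sqrt{R_0^{2}-t_0^{2}}/R_0$ exceeds the fixed constant $\frac{2m}{3R_0^{3}}$; since $F(t_0)\to0$ as $t_0\to0$ and as $t_0\to R_0$ (and since $2m/(3R_0^3)=16/(3m^2)$ can exceed $\max_{t_0}F(t_0)=R_0/2$ for small $m$), the lemma is left unproved precisely on a nonempty complementary range.

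The bootstrap you propose for that range is the genuine gap, and I do not believe it closes as sketched. The linear ODE $\frac{d\sigma}{d\rho}=\chi\rho\sigma+\frac{\rho F}{x^{2}}$ with $\sigma(R_0)=0$ gives $\sigma(\rho)\le C\int_{R_0}^{\rho}\frac{sF(s)}{x(s)^{2}}\,ds$, but the kernel $\int\frac{s}{x^{2}}\,ds$ is \emph{not} uniformly bounded: writing $\rho\,d\rho=x\,dx+t\,dt$ and using $x(t)\ge x_0 t/t_0$ one only gets a bound of the form $\ln(x/x_0)+\frac{t_0^{2}}{x_0^{2}}\ln(t^{*}/t_0)$ with $x_0=\sqrt{R_0^{2}-t_0^{2}}$, so feeding this back into $-dF/d\rho\le\rho\chi\,\sigma$ bounds the total decay of $F$ by $F(t_0)$ times a constant that blows up as $t_0\to R_0$ (and is not $<1$ uniformly in $m$ even for small $t_0$); hence ``total decay $\le\frac12F(t_0)$'' is not established, and the weaker sufficient condition you mention (divergence of $\int F\,dx/x$) is never verified either. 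A further structural obstacle: your a priori rate $-dF/dx\le 2m/x^{4}$ is \emph{exactly} the derivative of the threshold $2m/(3x^{3})$ below which $F$ would have to remain for finite-time blow-up, so $F-\frac{2m}{3x^{3}}$ is only non-decreasing and no simple barrier comparison forces it to become positive. To complete the proof along your lines you would need a quantitatively sharper near-horizon estimate on $\sigma$ (or simply fall back on an autonomous comparison as in the paper, which avoids the issue entirely).
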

\begin{proof}
For $n=3$ equation \eqref{main} is written as
\begin{equation}\label{cota}
\begin{split}
x''(t)&=\left[(t x'(t)-x)\frac{4 m}{2(x(t)^2+t^2)^3+m(x(t)^2+t^2)}+\frac{1}{x(t)}\right](1+x'(t)^2)\\
&\leq \left[(t x'(t))\frac{4 m}{2(x(t)^2+t^2)^3}+\frac{1}{x(t)}\right](1+x'(t)^2).
\end{split}
\end{equation}
Suppose $t<1$, then \eqref{cota} can by bounded by
\begin{equation*}
\begin{split}
x''(t)&\leq \left[x'(t)\frac{4 m}{2\left(x(t)^2+t_0^2\right)^3}+\frac{1}{x(t)}\right](1+x'(t)^2)\\
&=\left[x'(t)\frac{4 m}{2\left(t_0^6+3t_0^4x(t)^2+3t_0^2 x(t)^4+x(t)^6\right)}+\frac{1}{x(t)}\right](1+x'(t)^2)\\
&\leq \left[x'(t)\frac{2 m}{3 t_0^2 x(t)^4}+\frac{1}{x(t)}\right](1+x'(t)^2).
\end{split}
\end{equation*}
If $t\geq 1$ then 
\begin{equation*}
\begin{split}
x''(t)&\leq \left[(t x'(t))\frac{4 m}{2\left(t^6+3t^4x(t)^2+3t^2 x(t)^4+x(t)^6\right)}+\frac{1}{x(t)}\right](1+x'(t)^2)\\
&\leq \left[x'(t)\frac{2 m}{3x(t)^4}+\frac{1}{x(t)}\right](1+x'(t)^2)
\end{split}
\end{equation*}
In conclusion, \eqref{cota} can be bounded by an autonomuos expression:
\begin{equation}\label{autonomuos}
x''(t)\leq\left[x'(t)\frac{a}{x(t)^4}+\frac{1}{x(t)}\right](1+x'(t)^2)
\end{equation}
where $a=\max\left\lbrace \frac{2m}{3},\frac{2m}{3t_0^2}\right\rbrace>0$. Set $x(t)=e^{u(t)}$, we can rewrite \eqref{autonomuos} as
\[
e^{u(t)}u''(t)\leq \frac{a u'(t)}{e^{3u(t)}}+\frac{a u'(t)^3}{e^{u(t)}}+\frac{1}{e^{u(t)}}.
\]
If we study the equality in the last expression; since, by Lemma \ref{lema1bis}, $u(t)$ goes to infinity; it yields with a function $v$ (imposing the same initial conditions of $u$), with the same asymptotic behavior as $u$ , verifying
\begin{equation}
e^{v(t)}v''(t)=\frac{a v'(t)}{e^{3v(t)}}+\frac{a v'(t)^3}{e^{v(t)}}.
\end{equation}
Now make the following estimate:
\begin{equation}\label{7}
v''(t)\leq\frac{a (v'(t)+v'(t)^3)}{e^{2v(t)}}\leq\frac{a (v'(t)+v'(t)^3)}{e^{v(t)}}.
\end{equation}
Again, studying the equality in \eqref{7} will give an upper bound $\widetilde{v}$ for $v$. If we set $w(\widetilde{v})=\frac{d \widetilde{v}}{d t}$ then 
\[
\frac{d^2 \widetilde{v}(t)}{dt^2}=w(\widetilde{v})\frac{dw(\widetilde{v})}{d\widetilde{v}},
\]
and we can rewrite the equality in \eqref{7} as
\[
w \dot w=\frac{a (w^3+w)}{e^{\widetilde{v}}},
\]
where $\cdot=\frac{d}{d \widetilde{v}}$. Solving this equation gives that either $w$ is constant or $w(\widetilde{v})=-\tan\left(a e^{-\widetilde{v}}+c_1\right)$, for some constant $c_1\in \R$. Thus
\begin{equation}\label{8}
\widetilde{v}'(t)=-\tan(a e^{-\widetilde{v}(t)}+c_1).
\end{equation}
Indeed, we can compute $c_1$. Since the initial contitions of $x$ are $x(t_0)=\sqrt{R_0^2-t_0^2}$, $t_0 x'(t_0)=x(t_0)$, the initial conditions for $u$ given by $x(t)=e^{u(t)}$ (and also for $v$, and $\widetilde{v}$) are $u(t_0)=\ln\left(\sqrt{R_0^2-t_0^2}\right)$ and $u'(t_0)=1/t_0$. Plugging this conditions on \eqref{8} gives 
\begin{equation}\label{8bis}
c_1=-\arctan\left(\frac{1}{t_0}\right)-\frac{a}{\sqrt{R_0^2-t_0^2}}.
\end{equation}
If $[t_0,t^*)$ is the maximal interval of definition of $v$, when $t$ goes to $t^*$ then $\widetilde{v}(t)$ approaches infinity. By \eqref{8}, if $c_1\not=\pi/2+k\pi$ for any $k\in \mathbb{Z}$, we can also conclude that $\widetilde{v}'(t)$ approaches a constant value as $t$ goes to $t^*$. If there exists $k\in \mathbb{Z}$ such that $c_1=\pi/2+k \pi$, then it is enough to consider a perturbation $\widetilde{a}>a$ of the parameter $a$ to bound equation \eqref{autonomuos} which makes $c_1\not=\pi/2+k \pi$ for any $k\in \mathbb{Z}$. This can be done because $c_1$ varies continuosly on $a$. In both cases $\widetilde{v}'(t)$ approaches a constant value as $t$ goes to $t^*$. This behaviour is not possible if $t^*<+\infty$, so we conclude that $t^*=\infty$ and $x$, which has the same asymptotic behavior as $\widetilde{v}$, is defined in $[t_0,+\infty)$.
\end{proof}

\begin{theorem}\label{2.3}
For each $t_0\in (0,R_0)$ there exists a complete rotationally symmetric free boundary minimal hypersurface $\Sigma_{t_0}$ in the Schwarszchild space $(\R^n\setminus B_{R_0},\overline{g})$, $n\geq 3$ that intersects $\partial\R^n\setminus B_{R_0}$ at height $x_n=t_0$.

Moreover, {if $n=3$ then the height of $\Sigma_{t_0}$ with respect to the $x_n=0$ plane is unbounded  and} if $n\geq 4$ then $\Sigma_{t_0}$ is bounded above by the hyperplane of height $h_0$, where
\[
h_0=t_0+\int_{\sqrt{R_0^2-t_0^2}}^{+\infty}\frac{d\mu}{\sqrt{\left(\frac{R_0}{t_0}\right)^2\left(\frac{\mu}{\sqrt{R_0^2-t_0^2}}\right)^{2(n-2)}-1}}.
\]
\end{theorem}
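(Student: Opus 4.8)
The plan is to realize $\Sigma_{t_0}$ as the hypersurface of revolution generated by the maximal solution $x:[t_0,t^*)\to(0,\infty)$ of the ODE \eqref{main} with initial data $x(t_0)=\sqrt{R_0^2-t_0^2}$ and $t_0x'(t_0)=x(t_0)$, and then to read off every geometric conclusion from the qualitative behaviour of $x$ already established in Lemmas \ref{lema1bis} and \ref{lemma2}. Local existence and uniqueness are immediate since the right-hand side of \eqref{main} is smooth wherever $x>0$, and minimality of $\Sigma_{t_0}$ in $(\R^n\setminus B_{R_0},\overline{g})$ is built into \eqref{main} by construction.

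First I would check the geometric content of the initial data. Since $|\vec{x}|^2=x(t)^2+t^2$ along the profile, the condition $x(t_0)=\sqrt{R_0^2-t_0^2}$ forces $|\vec{x}|=R_0$ at $t=t_0$, so the boundary lies on the horizon $S_0=\{|\vec{x}|=R_0\}$ at height $x_n=t_0$; and by \eqref{4} the condition $t_0x'(t_0)=x(t_0)$ is exactly $\langle\vec{x},N\rangle=0$ there, i.e. $N$ is tangent to the radially-normal horizon, so $\Sigma_{t_0}$ meets $\partial M$ orthogonally. As the angle condition is conformally invariant, this free-boundary property holds with respect to $\overline{g}$ as well. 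Because $x$ is strictly increasing (Lemma \ref{lema1bis}) the profile is a graph over the $x_n$-axis, hence $\Sigma_{t_0}$ is embedded; and since $x(t)\to\infty$ as $t\to t^*$ (again Lemma \ref{lema1bis}) we get $|\vec{x}|\to\infty$, so $\Sigma_{t_0}$ runs off to spatial infinity and is therefore proper and complete with boundary only on the horizon.

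For the height statement, note that the height over $\{x_n=0\}$ is simply $t$, ranging over $[t_0,t^*)$. When $n=3$, Lemma \ref{lemma2} gives $t^*=+\infty$, so the height is unbounded. When $n\geq4$ the main point is to bound $t^*$, and here I would exploit the sign of $\chi$. Since $\Psi(t)=tx'(t)-x(t)\geq0$ along the solution (established inside the proof of Lemma \ref{lema1bis}) and $\chi>0$, equation \eqref{main} yields the autonomous differential inequality $x''\geq\frac{n-2}{x}(1+x'^2)$, which is exactly the Euclidean catenoid equation. Dividing by $1+x'^2$ gives $\frac{d}{dt}\ln(1+x'^2)\geq2(n-2)\frac{d}{dt}\ln x$, which I would integrate from $t_0$, using $1+x'(t_0)^2=R_0^2/t_0^2$, to obtain the closed-form lower bound
\[
x'(t)\geq\sqrt{\left(\frac{R_0}{t_0}\right)^2\left(\frac{x(t)}{\sqrt{R_0^2-t_0^2}}\right)^{2(n-2)}-1}.
\]
Changing variables $\mu=x(t)$, legitimate since $x$ is a strictly increasing bijection of $[t_0,t^*)$ onto $[\sqrt{R_0^2-t_0^2},\infty)$, then gives
\[
t^*-t_0=\int_{\sqrt{R_0^2-t_0^2}}^{+\infty}\frac{d\mu}{x'(t(\mu))}\leq\int_{\sqrt{R_0^2-t_0^2}}^{+\infty}\frac{d\mu}{\sqrt{\left(\frac{R_0}{t_0}\right)^2\left(\frac{\mu}{\sqrt{R_0^2-t_0^2}}\right)^{2(n-2)}-1}}=h_0-t_0,
\]
so that $t^*\leq h_0$ and $\Sigma_{t_0}\subset\{t_0\leq x_n<h_0\}$.

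I expect the comparison estimate in the last paragraph to be the crux. The algebraic reduction to the autonomous inequality $x''\geq\frac{n-2}{x}(1+x'^2)$ depends entirely on the signs of $\Psi$ and $\chi$, and the convergence of the resulting integral at $\mu=\infty$, which holds precisely when $2(n-2)>2$, is exactly what separates the unbounded case $n=3$ from the bounded case $n\geq4$, in agreement with Lemma \ref{lemma2}. The integrand is finite at the lower endpoint because $R_0>t_0$, so no spurious singularity appears there. The remaining assertions, namely smoothness, embeddedness and properness, are routine once the profile is known to be a strictly increasing graph diverging at $t^*$.
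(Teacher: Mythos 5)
Your proposal is correct and follows essentially the same route as the paper: existence, embeddedness and completeness are read off from Lemmas \ref{lema1bis} and \ref{lemma2}, the $n=3$ case is exactly Lemma \ref{lemma2}, and the height bound for $n\geq4$ comes from the catenoid inequality $x''\geq\frac{n-2}{x}(1+(x')^2)$ integrated in closed form using $1+x'(t_0)^2=R_0^2/t_0^2$, yielding the same expression for $h_0$. The only (harmless) difference is that you integrate the differential inequality directly to get a pointwise lower bound on $x'$ and then change variables $\mu=x(t)$, whereas the paper integrates the corresponding equality for a comparison solution $u$ of \eqref{main2} and invokes $u\leq x$; your variant makes that comparison step slightly more self-contained.
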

\begin{proof}
It follows from proof of Lemma \ref{lema1bis} that the function $x$ is strictly increasing. Thus, either $x$ is defined for all $t\geq t_0$ or $x$ blows up in finite time. Since $X$ is a parametrization of $\Sigma_{t_0}$ using cylindrical coordinates, if $x$ is defined for all $t\geq t_0$ then $\Sigma_{t_0}$ is complete and has unbounded height with respect to the $x_n=0$ plane and if $x$ blows up in finite time, then $\Sigma_{t_0}$ converges asymptotically to a plane of certain height. In both cases $\Sigma_{t_0}$ is complete, hence, we get existence.

Using \eqref{main}, we get 
\[
x''(t)\geq \frac{n-2}{x(t)}(1+x'(t)^2).
\]
Therefore, the solution $u$ to the ODE
\begin{equation}\label{main2}
	\begin{split}
	u''(t)=\frac{n-2}{u(t)}(1+u'(t)^2),
	\end{split}
\end{equation}
with initial conditions $u(t_0)=\sqrt{R_0^2-t_0^2}$, $u'(t_0)t_0=u(t_0)$ is a lower bound in the $(t,u)$-plane for the solution of \eqref{main} (see Figure 1). Multiplying by $u'(t)$ we can rewrite \eqref{main2} as
\[
\frac{2 u'(t)u(t)}{1+u'(t)^2}=2(n-2)\frac{u'(t)}{u(t)},
\]
which can be integrated:
\[
\log(1+u'(t)^2)=2(n-2)\log(u(t))+c,
\]
for some $c\in \R$ depending on the initial conditions. The above equation is equivalent to
\[
u'(t)=\sqrt{e^c u(t)^{2(n-2)}-1},
\]
which can be integrated in $u$, getting
\[
t(u)=t_0+\int_{u(t_0)}^u\frac{d\mu}{\sqrt{e^c \mu^{2(n-2)}-1}}.
\]
Now impose $u'(t_0)t_0=u(t_0)$ to get
\begin{equation}\label{9}
t(u)=t_0+\int_{\sqrt{R_0^2-t_0^2}}^{u}\frac{d\mu}{\sqrt{\left(\frac{R_0}{t_0}\right)^2\left(\frac{\mu}{\sqrt{R_0^2-t_0^2}}\right)^{2(n-2)}-1}}.
\end{equation}

For $n=3$, it follows from Lemma \ref{lemma2} that the height of $\beta$ given by $\beta(t)=(x(t),0,\dots,0,t)$, $t\in [t_0,\infty)$ with respect to the plane $x_3=0$ is unbounded. 

For $n\geq 4$ 
\[
\lim_{u\to +\infty} t(u)=h_0<+\infty,
\]
so $u$ blows up in finite time. Since $u$ is a lower bound for $x$ then $x$ also blows up and the height of $\Sigma_{t_0}$ is bounded above by $h_0$.
\end{proof}

Moreover, for $n=3$, using the notation of the previous proof, since 
\[
\lim_{u\to +\infty} t(u)=+\infty,
\]
we have a control on the growth of the solutions of \eqref{main}.

\begin{remark}\label{remark1}
The hypersurfaces obtained in Theorem \ref{2.3} are not totally geodesic. By contradiction, if there exists $t_0\in (0,R_0)$ such that $\Sigma_{t_0}$ is totally geodesic, then since totally geodesic surfaces are preserved by conformal changes of the metric we deduced that $\Sigma_{t_0}$ is a flat plane. In fact, non-compact 
totally geodesic surfaces in the Schwarzschild space are flat planes. Moreover since we are asking $\Sigma_{t_0}$ to be rotationally symmetric then it must be an horizontal plane that intersects $\partial M^n$ at height $x_n=t_0$. However, these horizontal planes are not free boundary planes, which is a contradiction. 
\end{remark}

\begin{remark}
Theorem \ref{2.3} together with Remark \ref{remark1} directly proves Theorem \ref{MAIN}. For the slightly general situation of Theorem \ref{newMAIN2} (which, in particular includes Theorem \ref{newMAIN}) the fact $u'(|\vec{x}|^2)<0$ is used to generalise the inequality \eqref{6.5} for the slightly general situation of \eqref{6}: if $\Psi(t)=t x'(t)-x(t)$ then
\begin{equation*}
\begin{split}
\Psi'(t)&=t x''(t)=t\left(\Psi(t)(-2(n-1)u'(|\vec{x}|^2))+\frac{n-2}{x(t)}\right)(1+x'(t)^2)\\
&>t\Psi(t)\chi(x(t),t)(1+x'(t)^2)\geq 0,
\end{split}
\end{equation*}

The other restriction ($-u'(|\vec{x}|^2)\leq \frac{1}{|\vec{x}|^\alpha}$ for $\alpha\geq 2$) is needed to prove the anologous result of Lemma \ref{lemma2} for equation \eqref{6}.
\end{remark}

\begin{lemma}\label{lemma2bis}
For $n=3$ the solution of \eqref{6} is defined in $[t_0,+\infty)$.
\end{lemma}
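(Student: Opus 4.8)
The plan is to repeat the argument of Lemma \ref{lemma2} almost verbatim, with the explicit Schwarzschild weight $\chi$ replaced by the general decay bound $-u'(|\vec{x}|^2)\leq c\,|\vec{x}|^{-\alpha}$, $\alpha\geq 2$. For $n=3$ equation \eqref{6} reads
\[
x''(t)=\Big[-4u'(|\vec{x}|^2)\big(tx'(t)-x(t)\big)+\frac{1}{x(t)}\Big]\big(1+x'(t)^2\big),
\]
and the version of \eqref{6.5} established in the previous remark already gives $\Psi(t)=tx'(t)-x(t)\geq 0$ together with $x'\geq x/t>0$; in particular $x$ is increasing and, by Lemma \ref{lema1bis}, $x\to\infty$ as $t\to t^*$. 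First I would insert the two sign facts $\Psi\geq 0$ and $0<-u'\leq c(x(t)^2+t^2)^{-\alpha/2}$ (using $|\vec{x}|^2=x(t)^2+t^2$ on $\Sigma$) to reach the one-sided estimate
\[
x''(t)\leq\Big[\frac{4c\,t\,x'(t)}{(x(t)^2+t^2)^{\alpha/2}}+\frac{1}{x(t)}\Big]\big(1+x'(t)^2\big).
\]

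The core of the proof is then to replace the right-hand side by an autonomous one, playing exactly the role of \eqref{autonomuos}. Using that $x'\geq x/t$ forces $x(t)\geq (x(t_0)/t_0)\,t$ along the trajectory, so that $t\leq (x^2+t^2)^{1/2}$ and hence $t\,(x^2+t^2)^{-\alpha/2}\leq x^{-(\alpha-1)}$, and splitting the range into $t<1$ and $t\geq 1$ as in Lemma \ref{lemma2} to absorb the surviving factor of $t$, I would arrive at
\[
x''(t)\leq\Big[\frac{a\,x'(t)}{x(t)^{\beta}}+\frac{1}{x(t)}\Big]\big(1+x'(t)^2\big),
\]
with $a=a(c,\alpha,t_0)>0$ and $\beta=\alpha-1$. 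Setting $x=e^{u}$ turns this into $u''\leq a\,u'e^{-\beta u}+a\,u'^{3}e^{(2-\beta)u}+e^{-2u}$, after which the remainder is a transcription of Lemma \ref{lemma2}: pass to the equality problem, introduce the comparison functions $v$ and $\widetilde v$ with the same initial data, write $w(\widetilde v)=d\widetilde v/dt$, integrate the resulting separable first-order equation in the phase plane (the analogue of $w=-\tan(a e^{-\widetilde v}+c_1)$), and read off that $\widetilde v'$ approaches a finite constant as $t\to t^*$. Since $\widetilde v$ is an upper barrier for $u=\ln x$ with $\widetilde v\to\infty$, a finite terminal slope is incompatible with $t^*<\infty$, so $t^*=+\infty$ and the solution is defined on $[t_0,+\infty)$; as in Lemma \ref{lemma2} one perturbs $a$ to a slightly larger $\widetilde a$ if the integration constant lands on a degenerate value.

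The step I expect to be the real obstacle is producing an autonomous bound whose exponent is strong enough for the phase-plane integration to close. The reduction in Lemma \ref{lemma2} genuinely relied on the cubic term $u'^{3}$ carrying exponential decay in $u$, i.e.\ on $\beta$ being safely larger than $2$: when $\beta\leq 2$ the model equation $\dot w=a w^{2}e^{(2-\beta)\widetilde v}+\cdots$ integrates to $-1/w\sim \tfrac{a}{2-\beta}e^{(2-\beta)\widetilde v}$, which forces $w$ to blow up at a finite $\widetilde v$. The clean estimate above only yields $\beta=\alpha-1$, comfortable for large $\alpha$ but exactly critical at the smallest admissible value $\alpha=2$, where $\beta=1$. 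In that borderline case the bound $tx'-x\leq tx'$ is too wasteful — it is the smallness of $\Psi=tx'-x$, not the size of $tx'$, that tames the conformal term. The hard part will therefore be to retain $\Psi$ and control it directly: from $\Psi'(t)=t\,x''(t)$ and $x^{2}+t^{2}\geq x^{2}\to\infty$ one expects the conformal contribution $4(-u')\Psi(1+x'^2)$ to become negligible, so that $x$ is asymptotically governed by the Euclidean catenoid equation $x''=x^{-1}(1+x'^2)$, whose solutions are globally defined with unbounded height. Making this comparison with the catenoid quantitative — bounding $\Psi$ along the flow rather than discarding it — is where the main work lies for $\alpha=2$, while for $\alpha>3$ the autonomous argument of Lemma \ref{lemma2} already suffices.
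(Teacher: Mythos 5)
Your overall strategy is the same as the paper's: majorize \eqref{6} by an autonomous inequality $x''\le\bigl[a\,x'x^{-\beta}+x^{-1}\bigr](1+x'^2)$, substitute $x=e^{u}$, pass to the equality problem, and integrate in the phase plane to conclude that $\widetilde v'$ tends to a finite constant while $\widetilde v\to\infty$, which is incompatible with $t^*<\infty$. The point where you diverge from the paper --- and where your argument is left open --- is the production of the autonomous bound. You charge all of the decay to $x$ via $t\,(x^2+t^2)^{-\alpha/2}\le x^{-(\alpha-1)}$, so you only reach $\beta=\alpha-1$, which as you correctly note degenerates at the admissible endpoint $\alpha=2$; your proposed repair (retaining $\Psi=tx'-x$ and comparing with the Euclidean catenoid) is a genuinely different argument that you do not carry out, so as written the proof is incomplete precisely in the critical range. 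The paper's device for closing this is the weighted AM--GM inequality \eqref{estimate} with exponent $\beta=3/2$, i.e. $(x^2+t^2)^{\alpha}\ge (x^2)^{3/2}(t^2)^{\alpha-3/2}=x^{3}t^{2\alpha-3}$: the decay is split between $x$ and $t$ so that the conformal term is always bounded by $4x'x^{-3}t^{-(2\alpha-4)}$, the leftover power of $t$ being nonnegative exactly because $\alpha\ge2$ and absorbed by the same $t<1$ versus $t\ge1$ dichotomy as in Lemma \ref{lemma2}. This yields \eqref{autonomuosbis} with the fixed exponent $x^{-3}$ for every $\alpha\ge2$, which is what makes the cubic term in \eqref{7bis} carry the factor $e^{-v}$ needed for the $\tan$ computation. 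That redistribution of the decay is the idea missing from your write-up.

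That said, your diagnosis of a critical exponent is pointing at something real. The paper's estimate \eqref{cotabis} bounds $-u'(|\vec{x}|^2)$ by $(x^2+t^2)^{-\alpha}=|\vec{x}|^{-2\alpha}$, whereas the hypothesis stated in the introduction (and the one satisfied with $\alpha=2$ by the cylinder and the $-\beta\ln(1+r^2)$ examples) is $-u'(|\vec{x}|^2)\le c\,|\vec{x}|^{-\alpha}=c\,(x^2+t^2)^{-\alpha/2}$. Under that literal reading with $\alpha=2$, no choice of weights in \eqref{estimate} extracts more than $x^{-1}$ from $t\,(x^2+t^2)^{-1}$ uniformly for $t\ge1$, so the paper's trick does not by itself rescue the borderline case either; the proof as given really uses the stronger decay. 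So your identification of the obstacle is accurate, but since you neither supply the AM--GM step that the paper uses nor complete the alternative catenoid-comparison argument, the proposal does not constitute a proof of the lemma.
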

\begin{proof}
For $n=3$ equation \eqref{6} is written as
\begin{equation}\label{cotabis}
\begin{split}
x''(t)&=\left[-4(t x'(t)-x)u'(|\vec{x}|^2)+\frac{1}{x(t)}\right](1+x'(t)^2)\\
&\leq \left[(t x'(t))\frac{4}{(x(t)^2+t^2)^\alpha}+\frac{1}{x(t)}\right](1+x'(t)^2).
\end{split}
\end{equation}
In order to follow the proof of Lemma \ref{lemma2} we need de following estimate, for $a,b\geq 0$, $\alpha> 0$ and $\beta\in (0,\alpha)$:
\begin{equation}\label{estimate}
(a+b)^\alpha\geq a^\beta b^{\alpha-\beta}\Rightarrow\left(\frac{1}{(a+b)^\alpha}\leq \frac{1}{a^\beta b^{\alpha-\beta}}\right)
\end{equation}
that we are going to apply for $\alpha\geq 2$ and $\beta=3/2$.
Suppose $t<1$, then \eqref{cotabis} can by bounded by
\begin{equation*}
x''(t)\leq \left[x'(t)\frac{4}{x(t)^3 t_0^{2\alpha-3}}+\frac{1}{x(t)}\right](1+x'(t)^2)
\end{equation*}
If $t\geq 1$ then 
\begin{equation*}
\begin{split}
x''(t)&\leq \left[(x'(t))\frac{4}{x(t)^3 t^{2\alpha-4}}+\frac{1}{x(t)}\right](1+x'(t)^2)\\
&\leq \left[(x'(t))\frac{4}{x(t)^3}+\frac{1}{x(t)}\right](1+x'(t)^2)
\end{split}
\end{equation*}
In conclusion, \eqref{cotabis} can be bounded by an autonomuos expression:
\begin{equation}\label{autonomuosbis}
x''(t)\leq\left[x'(t)\frac{a}{x(t)^3}+\frac{1}{x(t)}\right](1+x'(t)^2)
\end{equation}
where $a=\max\left\lbrace \frac{4}{t_0^{2\alpha-3}},4\right\rbrace>0$. Set $x(t)=e^{u(t)}$, we can rewrite \eqref{autonomuosbis} as
\[
e^{u(t)}u''(t)\leq \frac{a u'(t)}{e^{2u(t)}}+a u'(t)^3+\frac{1}{e^{u(t)}}.
\]
Again, we study the equality in the last expression. Since $u(t)$ goes to infinity; we found a function $v$, with the same asymptotic behavior as $u$ , verifying
\begin{equation}
e^{v(t)}v''(t)=\frac{a v'(t)}{e^{2v(t)}}+a v'(t)^3.
\end{equation}
Now make the following estimate:
\begin{equation}\label{7bis}
v''(t)\leq\frac{a (v'(t)+v'(t)^3)}{e^{v(t)}}.
\end{equation}
The rest of the proof matches exactly with the one given in Lemma \ref{2}.
\end{proof}

\section{Morse Index and total curvature}

For each $t_0\in (0,R_0)$, consider the proper non totally geodesic rotationally symmetric free boundary minimal surface $\Sigma_{t_0}$ in the Schwarszchild space $M^3_1$ constructed in the Theorem \ref{MAIN}.  It follows from the work of A. Carlotto \cite{C16} that the Morse index of $\Sigma_{t_0}$ can not be zero. Moreover, the Morse index of $\Sigma_0$ is one, as showed by R. Montezuma \cite{RaMo}. However, the Morse index of the surface $\Sigma_0$ was not computed in $M^3_k$ for $k>1$. In this section, we compute the index of $\Sigma_0$ in $M^3_k$ for $k\in [1,6/5]$ and for any $k\geq 1$ if the parameter $m$ is big enough. Since $\Sigma_{t_0}$ converges to $\Sigma_0$ as $t_0$ goes to 0 then the Morse index of $\Sigma_{t_0}$ is also one, at least for $t_0$ very close to 0.

Consider the quadratic form $Q_{\Sigma}(\cdot, \cdot)$ for an hypersurface $\Sigma\subset (M^n,g_{Sch(k)})$ given by
\begin{equation}\label{form1}
\begin{split}
Q_{\Sigma}(u,u) & = - \int_{\Sigma}u\left( \Delta_{({\Sigma},g)}u + ({{\rm Ric}_g}(N ,N) +|A_{\Sigma}|^2)u \right)dv_{\Sigma} \\ 
 & \qquad \qquad +\int_{\partial \Sigma}u\left(\frac{\partial u}{\partial \eta} - A_{\partial M}(N ,N )u \right)ds\,,
\end{split}
\end{equation}
where ${{\rm Ric}}_g$ denotes the Ricci  curvature of $(M^{n}, g_{Sch(k)})$ and  $A_{\Sigma}$, $A_{\partial M}$ stand for the second fundamental forms of $\Sigma$ and $\partial M$:
\[
A_{\partial M}(V,W) = -g_{Sch(k)}(\bar{\nabla} _{\eta}V,W),\quad A_{\Sigma}(V,W) = -g_{Sch(k)}(\bar{\nabla}_N V,W)\,.
\] 
where $\eta$ is the inwards pointing unit normal direction in $\partial M$ and $N$ denotes a unit normal vector field along $\Sigma$.

Let $R>R_0$ and define $\Sigma(R) :=\Sigma \cap \{|\vec{x}| \leq R\}$. The Morse index  ${\rm Ind}_{F}(\Sigma(R))$ of $\Sigma(R)$ is defined as the maximal dimension of a linear subspace $V$ of smooth functions $u:\Sigma(R)\rightarrow \mathbb{R}$ vanishing on $\Sigma\cap \{|\vec{x}| = R\}$ such that $Q_{\Sigma}(u, u)<0$, for all $u \in V\setminus \{0\}$. 

Taking limits, we define the Morse index ${\rm Ind}(\Sigma)$ of $\Sigma$ as  
\[
{\rm Ind}(\Sigma) := \limsup\limits_{R\rightarrow +\infty}{\rm Ind}_{F}(\Sigma(R)),
\]
which can be infinite.

Additionally, we can compute the Morse index ${\rm Ind}_F(\Sigma(R))$ as the number of negative eigenvalues, counting multiplicities, of the problem 
\[
(P) \quad\left\{ \begin{array}{ccc}
      \Delta_{({\Sigma},g)} \psi + ({{\rm Ric}_g}(N,N) +|A_{{\Sigma}}|^2)\psi =-\lambda \psi & \text{ in } & \Sigma(R) \\
     \psi=0 &\text{ on } &S(R)\cap \Sigma \\
     \dfrac{\partial \psi}{\partial \eta}=0  & \text{ on } & S_{0}\cap\partial\Sigma
    \end{array} \right.  
\]
where $S(R)=\mathbb{S}^{n-1}(R)$ is the sphere centered at the origin and radius $R$ and $S_0=S(R_0)$. 

From now on, we will denote $g:=g_{Sch(k)}$.

\begin{theorem}\label{MAINindex}
If $k\in (1,6/5]$ then the Morse index of $\Sigma_0\subset M^3_k$ is one. Moreover if we choose $m=m(k)$ big enough to ensure
\[
\frac{4 m
   r^{\frac{3}{k}+2}}{\left(2
   r^{3/k}+m r^2\right)^2}\leq \frac{3}{16}, \quad r\geq R_0
\]
then for all $k> 1$, $k\not=\frac{3}{2}$, the Morse index of $\Sigma_0\subset M_k^3$ is one.

Under this conditions, there exists a constant $0<L_0\leq R_0$ such that for all $t_0\in (0,L_0)$, the Morse index of $\Sigma_{t_0}$ is 1.
\end{theorem}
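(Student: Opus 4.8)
\emph{Reduction to a flat Schrödinger form.} Since $\Sigma_0$ is totally geodesic, $|A_{\Sigma_0}|^2\equiv0$, and since the horizon $S_0$ is totally geodesic, $A_{\partial M}(N,N)\equiv0$; hence the boundary condition in $(P)$ is the pure Neumann condition and $L=\Delta_{(\Sigma_0,g)}+\mathrm{Ric}_g(N,N)$. Writing $\Sigma_0$ in polar coordinates $(r,\theta)$, $r=|\vec{x}|\ge R_0$, $\theta\in\mathbb{S}^1$, and using that in dimension two the Dirichlet energy is conformally invariant while $dv_g=e^{2h}dA_0$, the form becomes
\[
Q_{\Sigma_0}(\psi,\psi)=\int_{\{r\ge R_0\}}\bigl(|\nabla_0\psi|^2-V(r)\psi^2\bigr)\,dA_0,\qquad V(r):=e^{2h}\,\mathrm{Ric}_g(N,N).
\]
Using $\langle\vec{x},N\rangle=0$ and $dh(N)=0$ on $\Sigma_0$ together with the conformal transformation law for the Ricci tensor and the Schwarzschild potential $u$, a direct computation gives
\[
V(r)=\frac{4m(4k-3)\,r^{3/k}}{k\,(2r^{3/k}+mr^2)^2},
\]
which is positive for $k>3/4$ and, for $k\neq 3/2$, decays faster than $r^{-2}$ at infinity.

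\emph{Mode decomposition and the key pointwise bound.} Separating variables $\psi=f(r)e^{ij\theta}$, $j\in\mathbb{Z}$, diagonalizes $Q_{\Sigma_0}$ into
\[
Q_j(f,f)=2\pi\int_{R_0}^\infty\Bigl(f'^2+\tfrac{j^2}{r^2}f^2-V f^2\Bigr)r\,dr,\qquad f'(R_0)=0,
\]
and $\mathrm{Ind}(\Sigma_0)$ is the total number of negative eigenvalues over all $j$. Setting $a=r^{3/k}$, $b=mr^2$, one has $Vr^2=\frac{4k-3}{k}\cdot\frac{4ab}{(2a+b)^2}$, and the elementary inequality $(2a+b)^2\ge 8ab$ yields $\frac{4ab}{(2a+b)^2}\le\frac12$. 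I would then check that both hypotheses reduce to the single pointwise estimate $Vr^2\le\frac34$: for $k\in(1,6/5]$ one gets $Vr^2\le\frac{4k-3}{2k}$, and $\frac{4k-3}{2k}\le\frac34\iff k\le\frac65$; for $k>6/5$, $k\neq\frac32$ (the degenerate case $n=2k$), the hypothesis $\frac{4mr^{3/k+2}}{(2r^{3/k}+mr^2)^2}\le\frac{3}{16}$ gives $Vr^2\le\frac{3(4k-3)}{16k}<\frac34$. Since $\frac34<1$, the integrand of $Q_j$ is nonnegative for every $|j|\ge1$, so the modes $|j|\ge1$ contribute nothing to the index.

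\emph{The $j=0$ mode.} It remains to prove that the radial mode carries exactly one negative eigenvalue. The lower bound is immediate from Carlotto's theorem, which gives $\mathrm{Ind}(\Sigma_0)\ge1$. For the upper bound I would pass to $t=\ln r$, turning $Q_0$ into a half-line Schrödinger form $\int_{\ln R_0}^\infty(\dot f^2-\tilde V f^2)\,dt$ with Neumann condition, where $\tilde V(t)=V(r)r^2=\frac{4k-3}{k}\cdot\frac{4ab}{(2a+b)^2}$ is positive, bounded by $\frac34$, and exponentially decaying in $t$. I would then show that this operator has at most one negative eigenvalue by a Sturm oscillation/comparison argument: the number of negative eigenvalues equals the number of interior zeros of the zero-energy Neumann solution, and a comparison with an explicitly integrable model potential of depth $\frac34$ bounds this count by one. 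Together with the vanishing contribution of the higher modes this yields $\mathrm{Ind}(\Sigma_0)=1$.

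\emph{Passage to $\Sigma_{t_0}$ and the main obstacle.} By Theorem \ref{2.3} the surfaces $\Sigma_{t_0}$ converge to $\Sigma_0$ as $t_0\to0$, and the associated Jacobi operators---now including the term $|A_{\Sigma_{t_0}}|^2$, which is small on compact sets for small $t_0$---converge accordingly. Since the preceding analysis provides a strictly negative first eigenvalue together with a strictly positive gap above it, both the negativity of $\lambda_1$ and the nonnegativity of the remaining spectrum persist under small perturbations; combined with Carlotto's bound $\mathrm{Ind}(\Sigma_{t_0})\ge1$, this produces $L_0\in(0,R_0]$ with $\mathrm{Ind}(\Sigma_{t_0})=1$ for all $t_0\in(0,L_0)$. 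I expect the main difficulty to be twofold. First, the sharp count in the $j=0$ mode is not a consequence of the pointwise bound $\tilde V\le\frac34$ alone---a constant potential of that size already oscillates infinitely often---so the comparison must genuinely combine the size and the exponential decay of $\tilde V$, and matching the threshold exactly at $\frac34$ is the technical core. Second, because the index is defined through an exhaustion, the persistence of the spectral gap for $\Sigma_{t_0}$ must be established with control of $\mathrm{Ric}(N,N)$ and $|A_{\Sigma_{t_0}}|^2$ that is uniform near infinity, not merely on compact sets.
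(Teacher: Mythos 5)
Your reduction and mode analysis reproduce the paper's computation faithfully: the conformal rewriting of the Jacobi operator with potential $Q=V=\frac{4(4k-3)mr^{3/k}}{k(2r^{3/k}+mr^2)^2}$, the Fourier decomposition in $\theta$, and the pointwise bound $Vr^{2}\le\frac{3}{4}$ obtained from $\frac{4ab}{(2a+b)^{2}}\le\frac12$ under either hypothesis on $(k,m)$ are exactly the paper's steps, and your conclusion that all modes $|j|\ge 1$ contribute nothing to the index is correct (the paper phrases it via the substitution $v_l=\sqrt{r}\,a_l$ and the identity $\int (v_l')^2+\frac{1}{2R_0}v_l(R_0)^2=\int A_\lambda v_l^2$, which forces $v_l\equiv0$ when $A_\lambda\le0$). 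The genuine gap is the radial mode. You propose to bound its negative-eigenvalue count by a Sturm comparison with ``an explicitly integrable model potential of depth $\frac34$,'' and then you yourself point out why this cannot work: in the variable $t=\ln r$ the constant potential $\frac34$ on a half-line produces the zero-energy solution $\cos(\frac{\sqrt3}{2}t)$, which oscillates infinitely often, so no pointwise comparison at the threshold $\frac34$ can yield the count ``at most one.'' You flag this as the technical core but never supply the replacement argument, so the decisive inequality $\mathrm{Ind}(\Sigma_0)\le1$ is not established. The paper closes this step by two ingredients you do not have: (i) Montezuma's Corollary 2.3, that negative eigenvalues of the radial problem $(R)$ are simple, and (ii) a Riccati comparison $\gamma_l=v_l'/v_l\ge g$, where $g$ is an explicit solution of $g'+g^{2}=\frac{1}{r^{2}}\bigl(\frac{\alpha^{2}}{4}-\frac14\bigr)$ with the correct Neumann datum $g(R_0)=\frac{1}{2R_0}$, which is used to rule out an interior zero of a radial eigenfunction with $\lambda\le0$; since only the first eigenfunction is nodeless, at most one eigenvalue is negative. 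A Bargmann-type bound exploiting the exponential decay of $\tilde V$ in $t$ (which you correctly identify as necessary) could plausibly be made quantitative, but as written nothing is proved.

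Two secondary points. For the lower bound $\mathrm{Ind}(\Sigma_0)\ge1$ you invoke Carlotto's theorem; the paper instead observes that $R_g>0$ for $k>1$ and applies the Hong--Saturnino rigidity theorem for free boundary surfaces (index zero would force $\Sigma_0$ totally geodesic with $R_g=0$ along it, a contradiction). Carlotto's rigidity is formulated for complete hypersurfaces without boundary, so you would at least need to justify its free boundary analogue here; this is repairable but not automatic. Finally, your treatment of the passage to $\Sigma_{t_0}$ is at the same level of rigor as the paper's (which simply asserts convergence of the index as $t_0\to0$), and your remark about needing uniform control of $\mathrm{Ric}_g(N,N)+|A_{\Sigma_{t_0}}|^{2}$ near infinity is a fair observation rather than a defect relative to the source.
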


\begin{proof}
From the Gauss equation, it follows that
\[
\frac{R_g}{2}-{\rm Ric}_g(N,N)-|A_{g}|^2=K_{g}-\frac{|A_{g}|^2}{2}
\]
\[
=-e^{-2f}(\Delta_{(\Sigma,\langle,\rangle)} f-K)-\frac{1}{2}(|A|^2-\frac{H^2}{2})e^{-2f}\,
\]
where $R_g$ denotes the scalar curvature of $(M^3_k,g)$ and $K$, $A$ are the Gaussian curvature  and the second fundamental form of $\Sigma$ computed with respect to the euclidean metric (or with respect to the metric $g$ if it appears as a subindex). In the previous expression we have used that if  the dimension of $\Sigma$ is $2$, and the metric writes as $g=e^{2f}\left\langle,\right\rangle$ then $\Delta_{(\Sigma,\langle,\rangle)} f-K=-K_{g}e^{2f}$. Moreover, the Laplacian is conformal in dimension $2$: 
\[
\Delta_{(\Sigma,g)}=e^{-2f}\Delta_{(\Sigma,\langle,\rangle)}\,.
\]
Using the formula given in (\ref{2}), we also obtain $|A_{g}|^2=e^{-2f}(|A|^2-\frac{1}{2}H^2)$. 

Let $\Sigma : = \Sigma_{t_0}$ be the surface constructed in the previous sections. The Schwarzschild metric is conformal to the Euclidean metric by $g_{Sch(k)}=e^{2f}\langle, \rangle$, where
\begin{equation}
f( r=|\vec{x}| )=\frac{2k}{3-2k}\ln \left( 1+\frac{m}{2|\vec{x}|^{\frac{3}{k}-2}} \right), \quad \vec{x}\in M_k^3\,. 
\end{equation}
Hence, using the identities above and the Gauss equation $2K=H^2-|A|^2$, we obtain
\begin{eqnarray*}
J_{\Sigma_{t_0}}u&:=& \Delta_{({\Sigma},g)} u + ({\rm Ric}_g(N,N) +|A_{g}|^2)u \\
&=&e^{-2f}\left(\Delta_{(\Sigma,\langle,\rangle)}u+(\frac{1}{2}e^{2f}R_g+\Delta_{(\Sigma,\langle,\rangle)}f)u +( |A|^2-\frac{3}{4}H^2)u\right)\,.
\end{eqnarray*}
First, we compute  the escalar curvature $R_g$ of $M_k^3$:
\begin{equation}\label{escalar}
R_g=-8e^{-2f}\left(v^{-1}\Delta_{(M_k^3,\langle,\rangle)}v\right)\,,
\end{equation}
where 
\[
e^{2f}=v^{4}\,.
\]
The computations in \eqref{escalar} give us:
\[
R_g=\frac{48 (k-1) m r^{3/k}}{k
   \left(2 r^{3/k}+m
   r^2\right)^2}e^{-2f}\,.
\]
Now, we obtain $\Delta_{(\Sigma,\langle,\rangle)}f$ for the surface $\Sigma_0$:
\[
\Delta_{(\Sigma,\langle,\rangle)}f=\left(\frac{3-2k}{k}\right)\frac{m}{r^{\frac{3}{k}}}\left( 1+\frac{m}{2r^{\frac{3}{k}-2}} \right)^{-2}.
\]   
Here we have used that $\Delta_{(\Sigma,\langle,\rangle)}f=f''+\frac{1}{r}f'$.

Note that, on $\Sigma_0$, we have $|A|^2-\frac{3}{4}H^2=0$. Thus, $\psi$ is a solution of
\[
\left\{ \begin{array}{ccc}
      \Delta_{({\Sigma},g)} \psi + ({\rm Ric}_g(N,N) +|A_{g}|^2)\psi =-\lambda \psi & \text{ in } & \Sigma(R) ,\\
     \psi=0 &\text{ on } &S(R)\cap \partial\Sigma(R) ,\\
     \dfrac{\partial \psi}{\partial \eta}=0 & \text{ on } & S_{0}\cap\partial\Sigma(R) ,
    \end{array} \right.  
\]
if, and only if, $\psi$ satisfies

\[
(R)\quad \left\{ \begin{array}{ccc}
      \Delta_{(\Sigma,\langle,\rangle)}\psi+Q \psi =-\lambda \psi \left( 1+\frac{m}{2r^{\frac{3}{k}-2}} \right)^{\frac{4k}{3-2k}} & \text{ in } & \Sigma(R) ,\\
     \psi=0 &\text{ on } &S(R)\cap \partial\Sigma(R) ,\\
     \dfrac{\partial \psi}{\partial \eta}=0 & \text{ on } & S_{0}\cap\partial\Sigma(R) ,
    \end{array} \right.  
\]
where 
\begin{eqnarray*}
Q&=& \left(\frac{3-2k}{k}\right)\frac{m}{r^{\frac{3}{k}}}\left( 1+\frac{m}{2r^{\frac{3}{k}-2}} \right)^{-2}+\frac{24 (k-1) m r^{3/k}}{k\left(2 r^{3/k}+mr^2\right)^2}\\
&=&\frac{4 (4 k-3) m r^{3/k}}{k
   \left(2 r^{3/k}+m
   r^2\right)^2}\,.
\end{eqnarray*}
Now it is a standard result that every solution of $(R)$ has the form
\[
v(r,\theta)=\sum\limits_{l=0}^{+\infty}a_l(r)\Phi_l(\theta)\,,\quad \theta\in \mathbb{S}^{1}\, ,\quad r\geq0,
\]
where $\Phi_0$ is constant, and for $l\geq1$, $\Phi_l$ is an eigenfunction of the Laplacian on the unit circle $\mathbb{S}^{1}$.

Hence, the function $a_l$ satisfies the equation
\[
\left\{ \begin{array}{ccc}
     L_l(a_l) = 0 & \text{ in } & \Sigma(R) ,\\
      a'_l(R_0)=0 \,,\\
     a_l(R)=0 \,,
    \end{array} \right.  
\]
where
\[
L_l:=\frac{d^2}{dr^2}+\frac{1}{r}\frac{d}{dr} +\frac{\lambda_l}{r^2} + Q
+\lambda \left( 1+\frac{m}{2r^{\frac{3}{k}-2}} \right)^{\frac{4k}{3-2k}} \,,
\]
and $\lambda_l=-l^2$, $l\geq0$.

By making the change of variables $v_l(r)=\sqrt{r}\cdot a_l(r)$, we obtain that
\[
\left\{ \begin{array}{ccc}
     \frac{d^2v_l}{dr^2} +A_{\lambda}\cdot v_l= 0 & \text{ in } & [R_0,R] ,\\
      v'_l(R_0)=\frac{1}{2R_0}v_l(R_0) \,,\\
     v_l(R)=0 \,,
    \end{array} \right.  
\]
where 
\[
A_{\lambda}=\frac{1}{4r^2}-\frac{l^2}{r^2} + Q
+\lambda \left( 1+\frac{m}{2r^{\frac{3}{k}-2}} \right)^{\frac{4k}{3-2k}} \,.
\]
Integrating by parts, we obtain 
\begin{equation}\label{parts}
\int\limits_{R_0}^{R}(v_l')^2dr+\frac{1}{2R_0}v_l^2(R_0)=\int\limits_{R_0}^{R}v^2_l\cdot A_{\lambda}dr\,.
\end{equation}
In particular if $A_\lambda\leq 0$ for some $\lambda\in \R$ then this expression ensures that the corresponding $v_l$ vanish. 

Using that 
\[
\frac{4k-3}{k}\leq \frac{3}{2}, \quad k\in (1,6/5], 
\]
\[
\frac{4 m r^{3/k}}{\left(2
   r^{3/k}+m r^2\right)^2}=\frac{4 m
   r^{\frac{3}{k}+2}}{\left(2
   r^{3/k}+m r^2\right)^2}\frac{1}{r^2}\overset{(*)}{\leq} \frac{1}{2r^2},
\]
where $(*)$ holds because 
\[
\frac{2ab}{(a+b)^2}\leq\frac{1}{2}, \quad a,b\in \R,
\]
we have the following bound for $Q$:
\begin{equation}\label{bound}
Q\leq \frac{3}{4r^2}.
\end{equation}
Since 
\[
\frac{4 m
   r^{\frac{3}{k}+2}}{\left(2
   r^{3/k}+m r^2\right)^2}\overset{m\to\infty}{\longrightarrow} 0
\]
uniformly in $r\geq R_0$ and 
\[
\frac{4k-3}{k}\leq 4, \quad k>1,
\]
the bound \eqref{bound} can be also obtained if $k>1$ and $m$ is big enough to ensure 
\begin{equation}\label{condicion}
\frac{4 m
   r^{\frac{3}{k}+2}}{\left(2
   r^{3/k}+m r^2\right)^2}\leq \frac{3}{16}, \quad r\geq R_0.
\end{equation}
Hence,
\begin{eqnarray*}
A_{\lambda}&=&\frac{1}{4r^2}-\frac{l^2}{r^2} + Q
+\lambda \left( 1+\frac{m}{2r^{\frac{3}{k}-2}} \right)^{\frac{4k}{3-2k}}\\
&\leq&\frac{1}{r^2}\left(1-l^2  \right)+\lambda \left( 1+\frac{m}{2r^{\frac{3}{k}-2}} \right)^{\frac{4k}{3-2k}} \,.
\end{eqnarray*}

Note that,  if $\lambda<0$, $l\geq 1$ then  $A_{\lambda}\leq 0$, for all $1< k\leq \frac{6}{5}$ (or $k>1$ and $m$ big enough to ensure \eqref{condicion}). Hence, by \eqref{parts}, $a_l=0$, for all $l\geq1$, if $\lambda<0$ and  $1< k\leq \frac{6}{5}$ (or $k>1$ and $m$ big enough to ensure \eqref{condicion}). This means that, with $\lambda<0$, the solutions of the problem $(R)$ are radial functions $a=a(r)$. Moreover, negative eigenvalues $\lambda$ associated with the eigenvalue problem (R) have multiplicity one (\cite{RaMo}, Corolary 2.3). In order to proof that ${\rm Ind}(\Sigma(R))$ is 0 or 1 we just need to check that the solution of (R) is positive inside $\Sigma(R)$. 

Now, we make an analysis as in \cite{RaMo}. We claim that the zeroes of $v_l$ are isolated whenever $v_l$ is not identically zero. In order to prove this last claim, by uniqueness of solutions, it suffices to verify that if $v_l(r_0)=0$, then $\frac{d}{dr}v_l(r_0)\neq0$. Hence, away from the zeroes of $v_l$, we define
\[
\gamma_l(r)=\frac{v'_l(r)}{v_l(r)}\,.
\]
Using the equation satisfied by $v_l$, we obtain
\[
\gamma'_l+(\gamma_l)^2=-A_{\lambda}
\]
with boundary condition $\gamma_l(R_0)=\frac{1}{2R_0}$. Again, note that
\[
Q\leq \frac{3}{4r^2}\, ,
\]
for all $r\geq R_0$.  Consequently, if $\lambda\leq0$,
\begin{eqnarray*}
\gamma'_l+(\gamma_l)^2&\geq&-\frac{1}{r^2}\left(1-l^2  \right)-\lambda \left( 1+\frac{m}{2r^{\frac{3}{k}-2}} \right)^{\frac{4k}{3-2k}}\\
&\geq& \frac{1}{r^2}\left( \frac{\alpha_0^2}{4}-\frac{1}{4} \right)\,,
\end{eqnarray*} 
where $\alpha^2_0=4l^2-3$. Now, we observe that the function
\[
f(r)=\frac{1}{2r}\left(1-\alpha\left( \frac{2}{1+\left(\frac{2r}{m} \right)^{\alpha}}-1 \right)  \right)\,,
\]
for a non-negative constant $\alpha$, satisfies $f(\frac{m}{2})=\frac{1}{m}$ and
\[
f'(r)+f^2(r)=\frac{1}{r^2}\left( \frac{\alpha^2}{4}-\frac{1}{4} \right)\,.
\]
Now, the function
\[
g(r)=a_0f(a_0r)\,,
\]
where $a_0=\frac{m}{2R_0}$, satisfies $g(R_0)=\frac{1}{2R_0}$ and
\[
g'(r)+g^2(r)=\frac{1}{r^2}\left( \frac{\alpha^2}{4}-\frac{1}{4} \right)\,.
\]
Therefore, $\gamma_l(r)\geq g(r) $. Now, by contradiction, if $v_l$ changes sign then there exists $R_0<r_0<R$ such that $v_l(r_0)=0$. Since  $\gamma_l(r)\rightarrow-\infty$ as $r\rightarrow r^-_0$, and $g(r)$ does not satisfies this "explosion" when $r\rightarrow r_0^-$ we reach a contradiction that allows us to conclude that for $\lambda\leq0$, $v_l$ does not change sign.

Since the only eigenfunction that does not change its sign is that one associated with the first eigenvalue, this implies that the Morse index of $\Sigma(R)$ is at most one. This gives us that either ${\rm Ind}(\Sigma(R))=0$ or ${\rm Ind}(\Sigma(R))=1$. 

To prove that ${\rm Ind}(\Sigma_0)=1$, we first note that $R_g>0$ if $k>1$ (equation \eqref{escalar}). If ${\rm Ind}(\Sigma_{0})=0$, it follows from Hong-Saturnino's Theorem (see Corollary 4.5 in \cite{HS}) that $\Sigma_0$ is totally geodesic and $R_g=0$ on $\Sigma_0$ which is a contradiction. Therefore, ${\rm Ind}(\Sigma_{0})=1$.
\end{proof}

\subsection{Total Curvature}

In this subsection, we are interested in verifying that the surfaces $\Sigma_{t_0}\subset M^3$, $t_0\in (0,R_0)$ have finite total curvature. In order to do that, we will use the following facts:
\begin{itemize}
\item[(1)] $\frac{x''}{1+(x')^2}=\frac{d}{dt} \arctan(x')$;
\item[(2)] the function $\arctan$ is bounded;
\item[(3)] the volume element $dv_{\delta}$ of $(\Sigma_{t_0},\langle,\rangle)$ is given by $dv_{\delta}=x\sqrt{1+(x')^2}dtd\theta$;
\item[(4)] there exists $t^*\geq 0$ such that for any $t> t^*$ then $x(t)>1$. Therefore $\frac{1}{x^4}<\frac{1}{x}$, for all $t> t^*$.
\end{itemize}
First, it follows from $(2.12)$ that
\[
x''\leq \left( x'\frac{a}{x^4}+\frac{1}{x} \right)(1+(x')^2)\,.
\]
Hence,
\begin{eqnarray*}
k_1^2&=&\frac{(x'')^2}{(1+(x')^2)^3}\\
&\leq& x'x''\frac{a}{x^4(1+(x')^2)^2}+\frac{x''}{x(1+(x')^2)^2}\\
&=& x'x''\frac{a}{x^4(1+(x')^2)^2}\frac{\sqrt{1+(x')^2}}{\sqrt{1+(x')^2}}+\frac{x''}{x(1+(x')^2)^2}\\
&\leq& x''\frac{a}{x^4(1+(x')^2)^2}(\sqrt{1+(x')^2})+\frac{x''}{x(1+(x')^2)^2}\,,
\end{eqnarray*}
where we have used that 
\[
\frac{x'}{\sqrt{1+(x')^2}}\leq 1\,.
\]
Therefore, using the facts (1), (2), (3) and (4), we obtain that
\[
\int_{\Sigma_{t_0}}k_1^2dv_{\delta}<+\infty\,.
\]
Also, we get that
\[
\int_{\Sigma_{t_0}}k_2^2dv_{\delta}<+\infty\,,
\]
since
\[
x''\geq \frac{1+(x')^2}{x} 
\]
implies that
\[
k_2^2= \frac{1}{(1+(x')^2)x^2}\leq\frac{x''}{x(1+(x')^2)^2}\,. 
\]
We conclude then
\[
\int_{\Sigma_{t_0}}|A|^2dv_{\delta}<+\infty\,,
\]
which give us that the total curvature of $(\Sigma_{t_0},g_{Sch(1)}=\overline{g})$ is finite:

\[
\int_{\Sigma_{t_0}}|A_{\overline{g}}|^2dv_{\overline{g}}=\int_{\Sigma_{t_0}}\left(|A|^2-\frac{H^2}{2}\right)dv_{\delta}<+\infty\,.
\]
The same proof holds for $\Sigma_{t_0}\subset M^3_k$ if we use equation \eqref{autonomuosbis} instead of \eqref{autonomuos}.

\bibliographystyle{amsplain}

\end{document}